\documentclass[10pt,reqno]{amsart}
\usepackage{amsfonts}
\usepackage{amsmath}
\usepackage{CJK}
\usepackage{amssymb}
\usepackage{latexsym,bm}
\usepackage[RGB]{xcolor}
\usepackage{mathrsfs}
\usepackage{amsthm}
\numberwithin{equation}{section}
\textwidth=15.0cm \textheight=21cm \hoffset=-1.1cm \voffset=-0.3cm
\newtheorem{theorem}{Theorem}[section]

\newtheorem{lemma}[theorem]{Lemma}

\newtheorem{proposition}[theorem]{Proposition}
\newcommand{\dif}{\mathrm{d}}

\newcommand{\SUM}[3]{\sum\limits_{{#1}={#2}}^{#3}}

\newcommand{\LIM}[2]{\lim\limits_{{#1}\rightarrow{#2}}}

\begin{document}
\title[Navier-Stokes equations with Maxwell's law]{Asymptotic stability of rarefaction waves for compressible Navier-Stokes equations with relaxation}
\author{Yuxi Hu and Xuefang Wang}
\begin{abstract}
The asymptotic stability of rarefaction wave for 1-d relaxed  compressible  isentropic Navier-Stokes equations  is established. For initial data with different far-field values, we show that  there exists a unique global in time solution. Moreover,  as time goes to infinity, the obtained solutions are shown to  converge uniformly to  rarefaction wave solution of $p$-system  with corresponding Riemann initial data. The proof is based on $L^2$ energy methods.
 \\[2em]
{\bf Keywords}:  Compressible Navier-Stokes equations; Rarefaction waves; Stability\\
 {\bf AMS classification code}: 35B25, 76N10
\end{abstract}
\maketitle
\section{ Introduction}
In this paper, we consider the system of one-dimensional isentropic compressible Navier Stokes equations in Lagrange coordinates as
\begin{align}\label{1.1}
\begin{cases}
 v_t=u_x,\\
u_t+ p(v)_x= \tilde S_x,
\end{cases}
\end{align}
with 
\begin{align}\label{1.2}
\tau  \tilde S_t+ \tilde S=\mu \frac{u_x}{v},
\end{align}
where $v, u, p, \tilde S$ denote the specific volume per unit mass, fluid velocity, pressure and stress tensor, respectively. The equations \eqref{1.1} are the 
consequences of conservation of mass and balance of momentum, respectively. The viscosity coefficient  $\mu$  is assumed to be a positive constant as well as the relaxation parameter $\tau$. Moreover, we assume the pressure $p$ satisfy $p(v)=a v^{-\gamma}$ with $\gamma>1$ being the adiabatic index.

In the constitutive relation \eqref{1.2}, $\tau$ is the relaxtion time describing the  time lag in the response of the stress tensor to the velocity gradient. In fact, even in simple fluid, water for example, the``time lag''  exists, but it is very small ranging from
1 ps to 1 ns, see \cite{MMA, SRK}. However, Pelton et al. \cite{PC} showed that such a``time lag'' cannot be neglected, even for simple fluids, in
the experiments of high-frequency (20GHZ) vibration of nano-scale mechanical devices immersed in water-glycerol mixtures. It turned out
  that, cp. also \cite{CS}, equation (\ref{1.2}) provides a general formalism  to characterize the fluid-structure interaction
of nano-scale mechanical devices vibrating in simple fluids. A similar relaxed constitutive relation was already proposed by Maxwell in \cite{MA}, in order to describe the relation of stress tensor and velocity gradient for a non-simple fluid.

The system \eqref{1.1} is coupled with the initial conditions
\begin{align} \label{1.3}
(v, u, \tilde S) (0,x)=(v_0, u_0, \tilde S_0)(x),\quad x\in \mathbb R,
\end{align}
with 
\[
\LIM x {\pm \infty}(v_0, u_0, \tilde S_0)(x)=(v_{\pm}, u_{\pm}, 0).
\]
Here, we assume, for the far field conditions, $v_+\neq v_-, u_+\neq u_-$ in general. Furthermore, $(v_-, u_-)$ and $(v_+, u_+)$ are supposed to be the Riemann initial data which generates a centered rarefaction wave
for the following $p-$system
\begin{align}\label{1.4}
\begin{cases}
v_t=u_x,\\
u_t+p(v)_x=0.
\end{cases}
\end{align}
That means, there exists a continuous weak solution with the form $(v^r, u^r)(x/t)$ of $p-$ system \eqref{1.4} with 
\begin{align}\label{1.5}
(v^r, u^r)(0, x)=\begin{cases}
(v_-, u_-), \quad x<0,\\
(v_+, u_+), \quad x>0
\end{cases}
\end{align}

The main purpose of this paper is devoted to establish the asymptotic stability of the rarefaction wave $(v^r, u^r, 0)$ defined above. More precisely, we assume the initial data $(v_0, u_0, \tilde S_0)$ is close
to $(v_0^r, u_0^r, 0)$ in a suitable sense and the amplitude $\delta=|v_+-v_-|+|u_+-u_-|$ of the rarefaction wave is sufficiently small. Then, we will show that there exists a unique global defined solution
$(v, u, \tilde S)$ to problem \eqref{1.1}-\eqref{1.3} which approaches the rarefaction wave $(v^r, u^r, 0)$ uniformly in $x$ as $t\rightarrow \infty$. That is, we have
\[
\|(v-v^r, u-u^r, \tilde S\|_{L^\infty} \rightarrow 0, \quad \mathrm{as}\quad t\rightarrow \infty.
\]
See Theorem \ref{th1.1} for more details.

Note that, when $\tau=0$,  the system  \eqref{1.1}, \eqref{1.2} reduce to the classical isentropic compressible Navier-Stokes equations, which read as
\begin{align}\label{1.6}
\begin{cases}
v_t=u_x,\\
u_t+p(v)_x=\left(\frac{\mu u_x}{v}\right)_x.
\end{cases}
\end{align}
For such system, the well-posedness and asymptotic stability results have been widely studied with various initial data, see \cite{KA, KZ, MN1, MN2}. In particular, Matsumura and Nishihara \cite{MN1} first established the asymptotic stability of rarefaction waves with small amplitude in 1986. Later, they \cite{MN2} extended their results to large data cases. The half-space problem was conducted by Kawashima and Zhu \cite{KZ}. A similar asymptotic stability result was also obtained for the full system of the compressible Navier-Stokes equations, see \cite{KMN}. 

The asymptotic stability of rarefaction waves were also shown for other related system, see \cite{MAT} for the Broadwell model, \cite{KT} for a model system of radiating gas, \cite{NNK, BHZ} for a model of hyperbolic balance law.

One should note that it is not obvious that the results which hold for the classical systems also hold for the relaxed system. Indeed, and for example, Hu and Wang \cite{HW}  and Hu, Racke and Wang \cite{HRW} showed that, for the one-dimensional isentropic and/or non-isentropic compressible Navier-Stokes system,  solutions  exist globally with arbitrary large initial data for classical system, while solutions blow up in finite time with some large initial data for the corresponding relaxed system. A similar qualitative change was observed before for certain thermoelastic systems, where the non-relaxed system is exponentially stable, while the relaxed one is not, see Quintanilla and Racke resp. Fern\'andez Sare and Mu\~noz Rivera \cite{QuRa011,FeMu012} for plates, and Fern\'andez Sare and Racke \cite{FeRa009} for Timoshenko beams.

The paper is organized as follows.  Some preliminaries and main theorem (Theorem \ref{th1.1}) are given in Section 2.  In Section 3, following to \cite{MN1, MN2} , we construct smooth approximations of rarefaction waves. The original problem is reformulated in Section 4 and there the statements of asymptotic stability results for the reformulated problems are given (Theorem \ref{th4.1}). The a priori estimates are also given in Section 4 (Proposition \ref{pro4.2}). Last, the proof of the a priori estimates (Proposition \ref{pro4.2}) are given in Section 5.

\textbf{Notations:}  $L^p(\mathbb R)$ and $W^{s,p}(\mathbb R)$  ($1\le p \le\infty$) denote the  usual Lebesgue  and Sobolev spaces over $\mathbb R$ with the norm $\|\cdot \|_{L^p}$ and $\|\cdot\|_{W^{s,p}}$, respectively. Note that, when $s=0$, $W^{0,p}=L^p$. For $p=2$, $W^{s, 2}$ are abbreviated to $H^s$ as usual. 
Let $T$ and $B$ be a positive constant and a Banach space, respectively. $C^k(0,T; B)(k \ge 0 )$ denotes the space of $B$-valued $k$-times continuously differentiable functions on $[0,T]$, and $L^p(0,T; B)$ denotes the space of $B$-valued $L^p$-functions on $[0,T]$. The corresponding space $B$-valued functions on  $[0,\infty)$ are defined similarly.

\section{ Preliminaries and main results}
Let's first recall the definition of rarefaction waves of p-system \eqref{1.4} with Riemann initial data \eqref{1.5}. For fixed $(v_-, u_-) (v_->0, u_-\in \mathbb R)$, we define a proper neighbourhood $\omega \subset \mathbb R_{v,u}^2:= \mathbb R_+\times  \mathbb R$ as 
\begin{align*}
R_1(v_-, u_-):= \left\{ (v, u)\in \omega | u=u_-- \int_{v_-}^v \lambda_1(s)\dif s, u\ge u_-\right\},\\
R_2(v_-, u_-):= \left\{ (v, u) \in \omega | u=u_--\int_{v_-}^v \lambda_2(s)\dif s, u\ge u_- \right\}
\end{align*}
and
\begin{align*}
RR (v_-, u_-):= \left\{ (v, u) \in \omega | u\ge u_--\int_{v_-}^v \lambda_1(s) \dif s, u\ge u_--\int_{v_-}^v \lambda_2(s) \dif s \right\},
\end{align*}
where $\lambda_1(v)=-\sqrt{-p^\prime(v)}$, $\lambda_2(v)=\sqrt{- p^\prime(v)}$ are eigenvalues of the matrix
\begin{align*}
\begin{pmatrix}
0& -1\\
p^\prime(v) & 0
\end{pmatrix}.
\end{align*}
If $(v_+, u_+) \in RR (v_-, u_-)$, then Riemann problem \eqref{1.4}, \eqref{1.5} admits a continuous weak solution of the form $(v^r, u^r)(x/t)$ which is called centered rarefaction wave.

Let 
\[
I_0:= \|(v_0- v_0^r, u_0-u_0^r)\|_{L^2}+\|((v_0)_x, (u_0)_x)\|_{H^1}+\|\tilde S_0\|_{H^2}.
\]
Our main result is stated as follows.
\begin{theorem}\label{th1.1}
Let $(v_+, u_+) \in RR (v_-, u_-)$ and $\delta=|v_+-v_-|+|u_+-u_-|$. Assume the initial data $(v_0, u_0, \tilde S_0)$ satisfy
\begin{align*}
(v_0- v_0^r, u_0-u_0^r, \tilde S_0) \in L^2,\\
( (v_0)_x, (u_0)_x)\in H^1, \tilde S_0 \in H^2
\end{align*}
and there exists a positive constant $\epsilon_0$ such that if $I_0+\delta < \epsilon_0$, then the initial value problem \eqref{1.1}-\eqref{1.3} has a unique global solution in time satisfying
\begin{align}\label{2.1}
\begin{cases}
(v-v_0^r, u-u_0^r, \tilde S)\in C^0(0, +\infty, L^2),\\
(v_x, u_x)\in C^0(0, +\infty, H^1)\cap L^2(0, +\infty, H^1), S\in C^0(0, +\infty, H^2)\cap L^2 (0, +\infty, H^2).
\end{cases}
\end{align}
Moreover, this solution approaches the rarefaction wave $(v^r, u^r, 0)$ uniformly in $x\in \mathbb R$ as $t\rightarrow +\infty$:
\begin{align}\label{2.2}
\|(v(t,x)-v^r(x/t), u(t,x)-u^r(x/t), \tilde S(t,x))\|_{L^\infty} \rightarrow 0, \quad \mathrm{as}\quad t\rightarrow \infty.
\end{align}
\end{theorem}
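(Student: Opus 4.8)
The plan is to carry out the $L^2$ energy method of Matsumura--Nishihara \cite{MN1,MN2} for the perturbation around a smoothed rarefaction wave, the essential new feature being that Maxwell's law \eqref{1.2} supplies only a \emph{relaxational} damping of the stress $\tilde S$, in place of the parabolic regularization present in the classical system \eqref{1.6}. Let $(V,U)(t,x)$ be the smooth approximate rarefaction wave constructed in Section 3 following \cite{MN1,MN2}: it obeys $V_t=U_x$ exactly and $U_t+p(V)_x=Q$ with an interaction remainder $Q$ decaying exponentially in $t$ (with $Q\equiv0$ when only one rarefaction family is present), has the expansive sign $U_x\ge0$, satisfies $\|(V_x,U_x)(t)\|_{L^p}\le C\delta(1+t)^{-1+1/p}$ for $1\le p\le\infty$ and $\|\partial_x^k(V,U)(t)\|_{L^2}\le C\delta(1+t)^{-(2k-1)/2}$ for $k\ge2$, and satisfies $\|(V-v^r,U-u^r)(t)\|_{L^\infty}\to0$ as $t\to\infty$. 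Introduce the perturbation $(\phi,\psi,\zeta)=(v-V,\,u-U,\,\tilde S-\mu U_x/V)$, the last subtraction removing the leading viscous stress of the background so that the forcing in the $\zeta$-equation is square integrable in time; one obtains a system of the schematic form $\phi_t=\psi_x$, $\psi_t+(p(v)-p(V))_x=\zeta_x+R_1$, $\tau\zeta_t+\zeta=\mu\psi_x/v+R_2$, with $R_1,R_2$ built from $Q$, $U_x/V$, their derivatives and quadratic terms in $\phi$, and with initial data of size $O(I_0+\delta)$.

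Local existence and uniqueness in the class \eqref{2.1} follow from standard theory for the quasilinear hyperbolic system \eqref{1.1}--\eqref{1.2} with semilinear relaxation, cf.\ \cite{HW,HRW}. Global existence, the regularity in \eqref{2.1} (the a priori bounds being on the perturbation), and the estimates needed for the asymptotics then follow by the usual continuity argument, once the a priori estimate of Proposition \ref{pro4.2} is established in the form
\[
\sup_{0\le t\le T}\mathcal E(t)+\int_0^T\mathcal D(t)\,\dif t\le C\bigl(I_0^2+\delta^{1/3}\bigr),
\]
with $\mathcal E(t)=\|(\phi,\psi,\zeta)(t)\|^2+\|(\phi_x,\psi_x)(t)\|_{H^1}^2+\|\zeta(t)\|_{H^2}^2$ and $\mathcal D(t)=\|(\phi_x,\psi_x)(t)\|_{H^1}^2+\|\zeta(t)\|_{H^2}^2+\int_{\mathbb R}U_x\,\phi^2\,\dif x$, valid as long as $\mathcal E$ and $\delta$ are small.

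The a priori estimate is built hierarchically. The zeroth-order step tests the momentum equation with $\psi$, adds the time derivative of the relative-energy density $\Phi(v,V)=\int_V^v(p(V)-p(s))\,\dif s\simeq\phi^2$, and adds the stress estimate obtained by testing $\tau\zeta_t+\zeta=\mu\psi_x/v+R_2$ with $v\zeta/\mu$: the coupling term $\int\psi_x\zeta\,\dif x$ cancels between the two, convexity of $p$ with $U_x\ge0$ converts $\int U_x\,g(v,V)\,\dif x$, $g(v,V)=p(v)-p(V)-p'(V)(v-V)\simeq\phi^2$, into the weighted dissipation $\int U_x\phi^2\,\dif x\ge0$, and the relaxation gives $\|\zeta\|^2$. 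Since the relaxation damps neither $\phi_x$ nor $\psi_x$ directly, the dissipation of $\phi_x,\phi_{xx}$ is recovered, exactly as for $\tau=0$, from the parabolic substructure of the system --- made manifest by the change of variable $w=u-\mu(\ln v)_x$, for which $v$ solves a quasilinear parabolic equation $v_t=\tfrac{\mu}{v}v_{xx}+w_x-\tfrac{\mu v_x^2}{v^2}$, the relaxation contributing only lower-order $\tau$-weighted corrections --- while the dissipation of $\psi_x,\psi_{xx}$ is recovered from the identity $\mu\psi_x/v=\zeta+\tau\zeta_t+R_2$ together with an interaction functional of the form $\tfrac{d}{dt}\int\psi\phi_x\,\dif x$. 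All the terms weighted by $V_x,U_x$ and their derivatives are absorbed using the decay $\|(V_x,U_x)(t)\|_{L^\infty}\lesssim(1+t)^{-1}$ and the smallness of $\mathcal E$ and $\delta$ --- this is where the $\delta^{1/3}$-type estimates of \cite{MN1,MN2} enter --- and the exponentially small $Q$ is harmless. Differentiating once and twice in $x$ and repeating closes all the norms in $\mathcal E$; the genuinely new technical obstacle is the top-order estimate for $\zeta_{xx}$, where testing $\partial_x^2$ of Maxwell's law against $\zeta_{xx}$ produces $\mu\int(\psi_x/v)_{xx}\zeta_{xx}\,\dif x$, hence $\psi_{xxx}$, one derivative above $\mathcal E$. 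I would close this by writing $\psi_x=v_t-U_x$, so $\psi_{xxx}=\partial_t v_{xx}-U_{xxx}$, and integrating by parts in $t$: the offending contributions collapse into the time derivative of a positive-definite combination $\|\zeta_{xx}\|^2+\alpha\|v_{xx}\|^2+\beta\int v_{xx}\zeta_{xx}/v\,\dif x$ plus controllable remainders, and a suitable choice of $\alpha,\beta$ finishes the estimate. Summing all estimates with small coupling weights gives the displayed inequality.

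For the asymptotics \eqref{2.2}, the global bound gives $\int_0^\infty\bigl(\|\phi_x\|_{H^1}^2+\|\psi_x\|_{H^1}^2+\|\zeta\|_{H^2}^2\bigr)\,\dif t<\infty$ and, after using the equations and integrating the nonlinear terms by parts, $\tfrac{d}{dt}\|\phi_x\|^2,\tfrac{d}{dt}\|\psi_x\|^2,\tfrac{d}{dt}\|\zeta_x\|^2\in L^1(0,\infty)$; hence $\|\phi_x(t)\|+\|\psi_x(t)\|+\|\zeta_x(t)\|\to0$ as $t\to\infty$. Since $\|(\phi,\psi,\zeta)(t)\|$ stays bounded, Gagliardo--Nirenberg gives $\|\phi(t)\|_{L^\infty}\le\sqrt2\,\|\phi(t)\|^{1/2}\|\phi_x(t)\|^{1/2}\to0$, and likewise $\|\psi(t)\|_{L^\infty}\to0$ and $\|\zeta(t)\|_{L^\infty}\to0$; since $\|(\mu U_x/V)(t)\|_{L^\infty}\lesssim(1+t)^{-1}\to0$ and $\|(V-v^r,U-u^r)(t)\|_{L^\infty}\to0$, the triangle inequality yields $\|(v-v^r,u-u^r,\tilde S)(t)\|_{L^\infty}\to0$, which is \eqref{2.2}. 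The main obstacle, as flagged above, is reconstructing the dissipative structure of the classical equations from the weaker, purely relaxational damping of Maxwell's law --- in particular, closing the top-order $\zeta_{xx}$ estimate without loss of derivatives --- while keeping everything compatible with the merely weighted $\phi$-dissipation $\int U_x\phi^2\,\dif x$ coming from the rarefaction and with the $L^2$-in-space norms of $V_x,U_x$, which are not integrable in time.
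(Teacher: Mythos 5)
Your overall skeleton coincides with the paper's: same smoothed wave $(V,U)$, same perturbation $(\varphi,\psi,S)=(v-V,\,u-U,\,\tilde S-\mu U_x/V)$, the same zeroth-order estimate (relative energy tested with $\psi$ and $\tfrac{v}{\mu}S$, coupling cancellation, convexity of $p$ giving the weighted dissipation $\int V_t\varphi^2\,\dif x$ with $V_t=U_x$, relaxation giving $\|S\|^2$), the same reduction of Theorem \ref{th1.1} to the perturbation theorem (Theorem \ref{th4.1}) via $E_0\le I_0+C\delta$, and the same asymptotics argument ($\Psi(t)=\|(\varphi_x,\psi_x,S_x)\|^2\in W^{1,1}(0,\infty)$ plus Gagliardo--Nirenberg and Lemma \ref{le3.2}). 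But the step where you recover the derivative dissipation --- the genuinely new point for $\tau>0$ --- is not right as you describe it. You claim that $\|\varphi_x\|_{H^1}$-dissipation comes ``exactly as for $\tau=0$'' from a parabolic substructure made manifest by $w=u-\mu(\ln v)_x$, ``the relaxation contributing only lower-order $\tau$-weighted corrections.'' For Maxwell's law \eqref{1.2} with $\tau$ a fixed positive constant this fails: the effective-velocity identity uses $\tilde S=\mu u_x/v$ exactly, and here $w_t+p(v)_x=(\tilde S-\mu u_x/v)_x=-\tau\tilde S_{tx}$, where by the constitutive law $\tau\tilde S_{tx}=(\mu u_x/v-\tilde S)_x$ contains $u_{xx}$ --- the same order as the would-be parabolic term, with no smallness in $\tau$. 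The relaxed system is hyperbolic, has no smoothing (indeed the paper stresses, citing the blow-up results, that $\tau>0$ changes the qualitative picture), so this route is at best circular with the $\psi_x$-estimate and is not carried out. The paper's actual mechanism (Lemmas \ref{le5.2}--\ref{le5.3}) is purely relaxational/hyperbolic: first obtain boundedness of $(\varphi_x,\psi_x,S_x)$ in $H^1$ together with time-integrated dissipation of $S_x$ in $H^1$ from the damping in \eqref{4.1}$_3$; then recover $\int_0^t\|\varphi_x\|_{H^1}^2$ by testing $\partial_x^k$ of the momentum equation with $\partial_x^{k+1}\varphi$ (using the already available $S_x$-dissipation and an integration by parts in $t$ on $\partial_t\partial_x^k\psi\,\partial_x^{k+1}\varphi$), and $\int_0^t\|\psi_x\|_{H^1}^2$ by testing $\partial_x^k$ of the stress equation with $\partial_x^{k+1}\psi$, absorbing the two estimates into each other with small constants. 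If you want your proof to close, this cross-multiplier (compensating-function) step must replace the parabolic claim.

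Relatedly, the ``top-order obstacle'' you flag is spurious in the correct scheme: the term $\mu\int(\psi_x/v)_{xx}S_{xx}\,\dif x$, whose leading part is $\int\psi_{xxx}S_{xx}\,\dif x$, cancels exactly (after one integration by parts) against the contribution $-\int\partial_x^3S\,\psi_{xx}\,\dif x$ arising when the momentum and stress equations are estimated simultaneously at the same order --- only commutator terms (the paper's $R_k^8$) survive, and they involve at most second derivatives. Your proposed repair (writing $\psi_{xxx}=\partial_tv_{xx}-U_{xxx}$ and building a modified quadratic functional in $v_{xx},S_{xx}$) is unverified and unnecessary. Two minor inaccuracies: with the mollification $(1+y^2)^{-q}$ used here the interaction term $g(V)_x$ decays only algebraically (Lemma \ref{le3.2}), not exponentially, and it is its time-integrability ($\int_0^\infty\|g(V)_x\|_{L^2}\dif t\le C\delta^{1/2}$ with $\epsilon=\delta^3$) that is actually used; and the closing rate is $\delta^\theta$ with $\theta=\min\{\tfrac12,\tfrac32-\tfrac1q\}$ as in Proposition \ref{pro4.2}, not $\delta^{1/3}$ --- harmless, but worth aligning.
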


\section{Smooth approximation of rarefaction waves}
We start from the inviscid Burgers equation
\begin{align} \label{3.1}
\begin{cases}
w_t^r+w^r w_x^r=0,\\
w^r(0,x)=w_0^r(x)=
\begin{cases}
w_-, \quad x<0,\\
w_+,\quad x>0.
\end{cases}
\end{cases}
\end{align}
Let $w_-<w_+$, then \eqref{3.1} has a continuous weak solution $w^r(x/t)$ with
\begin{align}\label{3.2}
w^r(x/t)=
\begin{cases}
w_-,\quad x/t\le w_-,\\
x/t,\quad w_-\le x/t \le w_+,\\
w_+,\quad \xi \ge w_+.
\end{cases}
\end{align}
This solution is called central rarefaction wave connecting the two constant state $w_-$ and $w_+$. Next, we consider the following system whose solutions approximate $w^r(x/t)$ smoothly.
\begin{align} \label{3.3}
\begin{cases}
w_t+w w_x=0,\\
w(0,x)=w_0(x)=\hat w+\tilde w k_q \int_0^{\epsilon x} (1+y^2)^{-q} \dif y,
\end{cases}
\end{align}
where $\hat w= \frac{w_++w_-}{2}, \tilde w=\frac{w_+-w_-}{2}$, $\epsilon>0$ is a constant. $k_q$ is a constant satisfying $k_q \int_0^\infty (1+y^2)^{-q} \dif y=1$ for $q>\frac{3}{2}$. Then we have the following lemma
\begin{lemma} \label{le3.1}\cite{MN1, MN2}
Let $w_-< w_+$, then there exists a smooth solution $w(t,x)$ of system \eqref{3.3} satisfying \\

1) $w_-< w(t,x)<w_+$,  $w_x(t,x)>0,\quad \forall (t,x)\in \mathbb R_+ \times \mathbb R$\\

2) $\forall 1\le p \le \infty$, there exists a constant $C_p$ such that
\begin{align*}
\|w_x\|_{L^p}\le C_p \min \left( \epsilon^{1-\frac{1}{p}} \tilde w, \tilde w^\frac{1}{p} t^{-1+\frac{1}{p}}\right)\\
\|\partial_x^k w\|_{L^p} \le C \min \left( \epsilon^{2-\frac{1}{p}} \tilde w, \epsilon^{(1-\frac{1}{2q})(1-\frac{1}{p})} \tilde w ^{-\frac{p-1}{2pq}} t^{-1-\frac{p-1}{2pq}} \right).
\end{align*}

3) if $w_->0$, then 
\begin{align*}
|w(t,x)-w_-|\le C \tilde w (1+(\epsilon x)^2)^{-\frac{q}{3}}(1+(\epsilon w_- t)^2)^{-\frac{q}{3}},\\
|w_x|\le C \epsilon \tilde w (1+ (\epsilon x)^2)^{-\frac{q}{2}} (1+(\epsilon w_- t)^2)^{-\frac{q}{2}}.
\end{align*}

4) if $w_+<0$, then 
\begin{align*}
|w(t,x)-w_-|\le C \tilde w (1+(\epsilon x)^2)^{-\frac{q}{3}}(1+(\epsilon w_+ t)^2)^{-\frac{q}{3}},\\
|w_x|\le C \epsilon \tilde w (1+ (\epsilon x)^2)^{-\frac{q}{2}} (1+(\epsilon w_+ t)^2)^{-\frac{q}{2}}.
\end{align*}

5) $\LIM t \infty \sup_{\mathbb R} |w(t,x)-w^r(x/t)| =0$.
\end{lemma}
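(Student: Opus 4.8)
The plan is to solve \eqref{3.3} explicitly by the method of characteristics and read off all five assertions from the resulting formula. Observe that $w_0'(x)=\tilde w k_q\epsilon(1+(\epsilon x)^2)^{-q}>0$ and that, by the normalisation $k_q\int_0^\infty(1+y^2)^{-q}\dif y=1$ together with the evenness of the integrand, $w_0$ maps $\mathbb R$ onto $(w_-,w_+)$. Since $w$ is constant along characteristics $\dot x=w$, for each fixed $t\ge0$ the characteristic map $X_t(x_0):=x_0+w_0(x_0)t$ is a smooth, strictly increasing bijection of $\mathbb R$, so $w(t,x):=w_0(X_t^{-1}(x))$ is well defined, smooth, and is the unique global solution of \eqref{3.3}. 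Differentiating the identity $x=x_0+w_0(x_0)t$ in $x$ gives $\partial_x x_0=(1+w_0'(x_0)t)^{-1}$, whence
\[
w_x(t,x)=\frac{w_0'(X_t^{-1}(x))}{1+w_0'(X_t^{-1}(x))\,t}>0,
\]
and combined with $w_-<w_0<w_+$ this is precisely assertion 1).

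For 2), abbreviate $x_0=X_t^{-1}(x)$ and change variables $x\mapsto x_0$, whose Jacobian is $1+w_0'(x_0)t\ge1$; since $p\ge1$,
\[
\|w_x(t)\|_{L^p}^p=\int_{\mathbb R}\frac{w_0'(x_0)^p}{(1+w_0'(x_0)t)^{p-1}}\,\dif x_0\le\|w_0'\|_{L^p}^p=C_p^p\,\tilde w^p\epsilon^{p-1},
\]
the last equality by the scaling $y=\epsilon x$; for the other member of the minimum, use $w_x\le t^{-1}$ and $\int_{\mathbb R}w_x\,\dif x=w_+-w_-=2\tilde w$ to get $\|w_x(t)\|_{L^p}^p\le t^{-(p-1)}\int_{\mathbb R}w_x\,\dif x=2\tilde w\,t^{-(p-1)}$. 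The bounds for $\partial_x^k w$ follow the same recipe: differentiating the characteristic identity $k$ times writes $\partial_x^k w$ as a finite sum of terms $w_0^{(j_1)}(x_0)\cdots w_0^{(j_\ell)}(x_0)\,(1+w_0'(x_0)t)^{-m}$ with $j_i\ge2$ for at least one index; bounding the denominators by $1$ and using $\|w_0^{(j)}\|_{L^p}\sim\tilde w\,\epsilon^{j-1/p}$ (again by scaling, together with $|w_0^{(j)}|\le C\tilde w\epsilon^{j}$ pointwise to absorb the nonlinear products) yields the $\epsilon^{2-1/p}\tilde w$ term, while retaining the maximal admissible negative power of $t$ from the denominators and estimating the remaining integrals through the algebraic tail $(1+(\epsilon x_0)^2)^{-q}$ of $w_0'$ produces the $t$-decay term, the exponents $(1-\tfrac1{2q})(1-\tfrac1p)$ and $-1-\tfrac{p-1}{2pq}$ being exactly those obtained by optimising the split between powers of $\epsilon$ and powers of $t$ in these tail integrals.

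Assertions 3) and 4) are the pointwise counterparts of this tail analysis. When $w_->0$ every characteristic through $x$ satisfies $x=x_0+w_0(x_0)t\ge x_0+w_-t$, so $x_0\le x-w_-t$; feeding this into the explicit left-tail bound $|w_0(x)-w_-|=\tilde w k_q\int_{|\epsilon x|}^\infty(1+y^2)^{-q}\dif y\le C\tilde w(1+(\epsilon x)^2)^{\frac12-q}$ (and trivially $|w_0-w_-|\le2\tilde w$ always), and splitting according to whether $|\epsilon x_0|\le\tfrac12\epsilon w_-t$ or not, distributes the decay between $x$ and $w_-t$ and gives the stated product bound; differentiation yields the companion estimate for $w_x$, and the case $w_+<0$ is symmetric. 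Finally, for 5) one works in the self-similar variable $\xi=x/t$: for $\xi\notin[w_-,w_+]$ the difference $w(t,\xi t)-w^r(\xi)$ is dominated by the tails just described and tends to $0$ as $t\to\infty$; for $\xi\in[w_-,w_+]$ one has $w^r(\xi)=\xi$ and, writing $w(t,\xi t)=w_0(x_0)$ with $\xi=w_0(x_0)+x_0/t$, the decay of $w_0'$ forces $|x_0|$ to grow only sublinearly in $t$ uniformly on this $\xi$-range, so $|w(t,\xi t)-\xi|=|x_0|/t\to0$ uniformly; combining the two regions gives $\sup_{\mathbb R}|w(t,x)-w^r(x/t)|\to0$.

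The one genuinely delicate point is the second displayed estimate in 2): organising the higher-order chain-rule expansion of $\partial_x^k w$ and calibrating the interpolation between the $\epsilon$-scaling and the $t$-decay so as to land on the precise exponents. Everything else is a direct consequence of the characteristic representation $w(t,x)=w_0(X_t^{-1}(x))$; this lemma is, in essence, the construction carried out in \cite{MN1, MN2}, to which we refer for the remaining computational details.
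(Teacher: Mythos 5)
The paper offers no proof of Lemma \ref{le3.1} at all --- it is quoted directly from \cite{MN1, MN2} --- and your characteristics-based construction ($w(t,x)=w_0(X_t^{-1}(x))$ with $X_t(x_0)=x_0+w_0(x_0)t$, monotone data preventing shocks, change of variables with Jacobian $1+w_0'(x_0)t$) is exactly the argument of those references, so you are in essence reproducing the cited proof rather than taking a different route. Your outline is correct, with part 1) and the first estimate in 2) complete as written, and your deferral of the remaining exponent bookkeeping for $\partial_x^k w$ and of the tail/convergence computations in 3)--5) to \cite{MN1, MN2} is consistent with the paper's own treatment, which cites those works in lieu of any proof.
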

Next, we construct smooth solutions which approximate the weak solutions $(v^r, u^r)(x/t)$ of  system \eqref{1.4}-\eqref{1.5} by using $w(t,x)$.  Firstly, suppose $(v_+, u_+)\in R R (v_-, u_-)$, then 
there exists a unique pair $(\bar v, \bar u)$ satisfying
\[
(\bar v, \bar u) \in R_1(v_-, u_-),\quad  (v_+, u_+)\in R_2(\bar v, \bar u).
\]
Moreover, the continuous weak solution of system \eqref{1.4}-\eqref{1.5} can be expressed as 
\begin{align}\label{3.4}
(v^r, u^r)(\xi)= (v_1^r+v_2^r-\bar v, u_1^r+u_2^r-\bar u)(\xi),\quad, \xi:=x/t,
\end{align}
where
\begin{align*}
\lambda_1(v_1^r(\xi))=w_1^r(\xi), \lambda_2(v_2^r(\xi))=w_2^r(\xi),\\
u_1^r(\xi)=u_--\int_{v_1}^{v_1^r(\xi)} \lambda_1(s)\dif s, u_2^r(\xi)=\bar u- \int_{\bar v}^ {v_2^r(\xi)}\lambda_2(s)\dif s,
\end{align*}
and $w_1^r(\xi), w_2^r(\xi)$ are given by \eqref{3.2} with 
\begin{align*}
w_{1-}=\lambda_1(v_-), w_{1+}=\lambda_1(\bar v), w_{2-}=\lambda_2(\bar v), w_{2+}=\lambda_2(v_+).
\end{align*}
Now, we define $(V, U)$ as
\begin{align}\label{3.5}
\begin{cases}
(V, U)(t,x)=(V_1+V_2-\bar v, U_1+U_2-\bar u)(t,x),\\
\lambda_1(V_1)=w_1(t,x), \lambda_2(V_2)=w_2(t,x),\\
U_1=u_1-\int_{v_1}^{V_1} \lambda_1(s)\dif s, U_2=\bar u-\int_{\bar v}^{V_2} \lambda_2(s)\dif s
\end{cases}
\end{align}
where $w_1(t,x), w_2(t,x)$ are solutions of \eqref{3.3}. Note that $(V_1, U_1)$ and $(V_2, U_2)$ defined above are exact solution of $p$-system $\eqref{1.4}$ and $(V, U)$ satisfies
\begin{align}\label{3.6}
\begin{cases}
V_t-U_x=0,\\
U_t+p(V)_x=g(V)_{x},
\end{cases}
\end{align}
where $g(V)=p(V)-p(V_1)-p(V_2)+p (\bar v)$. 

By use of Lemma \ref{le3.1}, we have
\begin{lemma}\label{le3.2} \cite{MN1, MN2, NNK}
(V, U) defined in \eqref{3.5} satisfy

1) $V_t>0$,  $\forall (x,t)\in \mathbb R_+\times \mathbb R$,

2) There exists a constant $C$ such that 
\begin{align}
|V_x|\le C V_t,\quad V_t \le C \epsilon \delta,\quad \forall (t,x) \in \mathbb R_+\times \mathbb R
\end{align}

3) Denote $\tilde w_i=\frac{w_{i+}-w_{i-}}{2}$, then $\forall t\in \mathbb R_+$, 
\begin{align*}
\|g(V)_x\|_{L^p}\le C \epsilon^{1-\frac{1}{p}} \tilde w_1 \tilde w_2 \left\{ (1+(\epsilon w_{2-}t)^2)^{-\frac{q}{3}}+(1+(\epsilon w_{1+}t)^2)^{-\frac{q}{3}}\right\}
\end{align*}
and
\[
\int_0^\infty \|g(V)_x\|_{L^p} \dif x \le C \delta^2 \epsilon^{-\frac{1}{p}}.
\]

4)$\forall t\in \mathbb R_+ $, 
\[
\|V_x\|_{L^p}, \|U_x\|_{L^p} \le C \min \{ \delta \epsilon^{1-\frac{1}{p}}, \delta^\frac{1}{p} (1+t)^{-1+\frac{1}{p}}\}, 
\]

5) $\forall t\in \mathbb R_+ $ and $k=2, 3, 4$, 
\begin{align*}
\|\partial_x^k V\|_{L^p}, \|\partial_x^k U\|_{L^p} \le C \left( \delta^{-\frac{p-1}{2pq}} \epsilon^{(1-\frac{1}{2q})(1-\frac{1}{p})} (1+t)^{-1-\frac{p-1}{2pq}}+\delta^\frac{1}{p} (1+t)^{-2+\frac{1}{p}} \right)
\end{align*}
and $\forall p >1$, 
\[
\int_0^\infty ( \|\partial_x^k V\|_{L^p}+\|\partial_x^k U\|_{L^p}) \dif t \le C \delta^{-\frac{p-1}{2pq}}.
\]

6) $\LIM t \infty \sup_{\mathbb R} |(V, U)(t, x)- (v^r, u^r)(x/t)|=0$.
\end{lemma}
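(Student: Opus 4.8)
The plan is to deduce all six assertions of the lemma from Lemma~\ref{le3.1} together with the elementary fact that, for $p(v)=av^{-\gamma}$, the speeds $\lambda_1,\lambda_2$ are $C^\infty$ and strictly monotone on every compact subinterval of $(0,\infty)$ (since $\lambda_1'=-\lambda_2'=p''(v)/(2\sqrt{-p'(v)})>0$ there). First I would invert the defining relations in \eqref{3.5}: with $\Phi_1=\lambda_1^{-1}$, $\Phi_2=\lambda_2^{-1}$ one has $V_1=\Phi_1(w_1)$, $V_2=\Phi_2(w_2)$, and for $\delta$ small the ranges $w_1\in[\lambda_1(v_-),\lambda_1(\bar v)]$, $w_2\in[\lambda_2(\bar v),\lambda_2(v_+)]$ lie in fixed compact subsets of $(-\infty,0)$ and $(0,\infty)$ respectively, so $\Phi_1,\Phi_2$ and all their derivatives are bounded on these ranges, $\Phi_1'>0>\Phi_2'$, and $\tilde w_i=|w_{i+}-w_{i-}|/2=O(\delta)$. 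Differentiating \eqref{3.5} and using the Burgers relation $(w_i)_t=-w_i(w_i)_x$ gives $(V_i)_x=\Phi_i'(w_i)(w_i)_x$ and $(V_i)_t=-\Phi_i'(w_i)\,w_i\,(w_i)_x$. Since $(w_i)_x>0$ by Lemma~\ref{le3.1}(1), $w_1<0<w_2$ and $\Phi_1'>0>\Phi_2'$, each summand of $V_t=(V_1)_t+(V_2)_t$ is strictly positive, proving (1); and since $|w_1|,|w_2|$ are bounded below, $|V_x|\le|(V_1)_x|+|(V_2)_x|\le C\big((V_1)_t+(V_2)_t\big)=CV_t$, while $V_t\le C\big(\|(w_1)_x\|_{L^\infty}+\|(w_2)_x\|_{L^\infty}\big)\le C\epsilon\delta$ by Lemma~\ref{le3.1}(2), proving (2).

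For (3) the point is the separation of the two wave fans. Using $V_x=(V_1)_x+(V_2)_x$ and that $p(\bar v)$ is constant, $g(V)_x=[p'(V)-p'(V_1)](V_1)_x+[p'(V)-p'(V_2)](V_2)_x$; since $V-V_1=V_2-\bar v$, $V-V_2=V_1-\bar v$ and $\Phi_2(w_{2-})=\bar v=\Phi_1(w_{1+})$, the mean value theorem gives $|g(V)_x|\le C|w_2-w_{2-}|\,|(w_1)_x|+C|w_1-w_{1+}|\,|(w_2)_x|$. Here $(w_1)_x$ is essentially supported in the $1$-fan (near $x\sim\hat w_1t<0$) and $(w_2)_x$ in the $2$-fan (near $x\sim\hat w_2t>0$), so in each product the deviation factor is evaluated away from its own fan, and Lemma~\ref{le3.1}(3)--(4) bound it by $C\tilde w_i(1+(\epsilon x)^2)^{-q/3}$ times the corresponding temporal factor $(1+(\epsilon w_{2-}t)^2)^{-q/3}$ resp.\ $(1+(\epsilon w_{1+}t)^2)^{-q/3}$, while $|(w_j)_x|\le C\epsilon\tilde w_j(1+(\epsilon x)^2)^{-q/2}(1+(\epsilon w_jt)^2)^{-q/2}$. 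Multiplying, the combined spatial weight is $(1+(\epsilon x)^2)^{-5q/6}$, whose $L^p_x$-norm is $\le C\epsilon^{-1/p}$ since $5q/6>1$ for $q>\tfrac32$; together with $\tilde w_1\tilde w_2\le C\delta^2$ this gives the first bound of (3), and integrating the surviving temporal factors over $t\ge0$ converges because $\tfrac{2q}{3}>1$, yielding $\int_0^\infty\|g(V)_x\|_{L^p}\,\dif t\le C\delta^2\epsilon^{-1/p}$.

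For (4), $|V_x|,|U_x|\le C\big(|(w_1)_x|+|(w_2)_x|\big)$---the bound on $U_x$ because \eqref{3.6} gives $U_x=V_t$ and $(U_i)_x=(V_i)_t$---so Lemma~\ref{le3.1}(2), $\tilde w_i=O(\delta)$ and the elementary inequality $\epsilon^{1-1/p}\delta\le\delta^{1/p}$ (for $\delta,\epsilon\le1$) let one replace $t^{-1+1/p}$ by $(1+t)^{-1+1/p}$, giving (4). Part (5) is the same in spirit but heavier: by the chain rule $\partial_x^kV$ is a finite sum of terms $\Phi_i^{(m)}(w_i)\,\partial_x^{a_1}w_i\cdots\partial_x^{a_m}w_i$ with $a_1+\cdots+a_m=k$ and all $a_j\ge1$. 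The leading term $\Phi_i'(w_i)\partial_x^kw_i$ is controlled by the second bound of Lemma~\ref{le3.1}(2), which with $\tilde w_i=O(\delta)$ is the first term of the claimed estimate for $t\ge1$ and is absorbed into it for $t\le1$ (using $\epsilon,\delta\le1$); the genuine products are handled by H\"older's inequality and the two bounds of Lemma~\ref{le3.1}(2): those of the form $(w_x)^k$ by $L^{kp}$-interpolation, $\|(w_x)^k\|_{L^p}=\|w_x\|_{L^{kp}}^k\le C\delta^{1/p}(1+t)^{-k+1/p}$, which for $k\ge2$ is bounded by the second term, and every other product by placing all but the highest-order factor in $L^\infty$ (each such factor being $O(1)$) and the remaining one in $L^p$, bounded by the first term; the time integrals converge for $p>1$ since $-1-\tfrac{p-1}{2pq}<-1$. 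Finally, (6) follows from Lemma~\ref{le3.1}(5): $V-v^r=\sum_{i=1,2}\big(\Phi_i(w_i)-\Phi_i(w_i^r)\big)\to0$ uniformly in $x$ because $\Phi_i$ is uniformly continuous on the relevant compact range and $\sup_{\mathbb R}|w_i(t,x)-w_i^r(x/t)|\to0$, and the convergence of $U$ to $u^r$ then follows from that of $V$ through the integral formulas in \eqref{3.5}.

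I expect the main obstacle to be part (5): one must enumerate all the products $\partial_x^{a_1}w\cdots\partial_x^{a_m}w$ produced by the chain rule and, for each, split the time axis and distribute the two competing bounds of Lemma~\ref{le3.1}(2) (the ``$\epsilon$''-bound near $t=0$, the ``$t$''-bound for $t\ge1$) through H\"older so that both the pointwise decay rate and the $L^1_t$-integrability emerge exactly as stated. Everything else is routine given the monotonicity and boundedness of $\Phi_1,\Phi_2$ and Lemma~\ref{le3.1}, the only minor point being to check that the ranges of $w_1,w_2$ stay inside $(-\infty,0)$ and $(0,\infty)$, where Lemma~\ref{le3.1}(3)--(4) apply with the correct signs---which holds once $\delta$ is small.
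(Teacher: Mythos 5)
The paper does not prove Lemma \ref{le3.2} at all --- it is quoted from \cite{MN1,MN2,NNK} --- so your reconstruction can only be measured against the standard argument in those references, and in outline it is the same one: invert the $\lambda_i$ to write $V_i=\Phi_i(w_i)$, use $\Phi_1'>0>\Phi_2'$, $w_1<0<w_2$ with $|w_i|$ bounded away from zero and $\tilde w_i=O(\delta)$, and transfer the bounds of Lemma \ref{le3.1}. Your treatment of (1), (2), (4), (5) and (6) is correct (in (5) the splitting into $t\le 1$ and $t\ge 1$, the $L^{kp}$ interpolation for $(w_x)^k$, and putting the lower-order factors in $L^\infty$ is exactly what is needed, and the same chain-rule argument applies to $U_i$, which is also a smooth function of $w_i$).

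The one step I would not accept as written is part (3). You multiply, at the same point $x$, the spatial weight $(1+(\epsilon x)^2)^{-q/3}$ from the bound on $|w_2-w_{2-}|$ with the weight $(1+(\epsilon x)^2)^{-q/2}$ from the pointwise bound on $|(w_1)_x|$, obtaining a combined weight $(1+(\epsilon x)^2)^{-5q/6}$. But the estimates of Lemma \ref{le3.1}(3)--(4) cannot hold for all $x$ (ahead of the fan $|w-w_-|$ tends to $2\tilde w$ while the stated right-hand side decays in $t$); in \cite{MN1,MN2} they are proved only on the half-line away from the corresponding fan: for $w_->0$ on $x\le 0$, for $w_+<0$ on $x\ge 0$ (and part (4) should bound $|w-w_+|$, a typo in the paper which you silently corrected). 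Since $w_{2-}>0$ and $w_{1+}<0$, the two decaying bounds you multiply are valid on complementary half-lines and never simultaneously, so the weight $(1+(\epsilon x)^2)^{-5q/6}$ is never available. The fix is the standard one and costs nothing: split $\mathbb R$ at $x=0$; on $x\le 0$ use the $x$- and $t$-decay of $|w_2-w_{2-}|$ together with $\|(w_1)_x\|_{L^p}\le C\epsilon^{1-\frac1p}\tilde w_1$, and on $x\ge 0$ use the decay of $|(w_1)_x|$ together with the crude bound $|w_2-w_{2-}|\le 2\tilde w_2$, arguing symmetrically for the term $|w_1-w_{1+}|\,|(w_2)_x|$. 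This yields exactly the stated bound $C\epsilon^{1-\frac1p}\tilde w_1\tilde w_2\{(1+(\epsilon w_{2-}t)^2)^{-q/3}+(1+(\epsilon w_{1+}t)^2)^{-q/3}\}$, and the time integration (note the paper's $\dif x$ there is a typo for $\dif t$) then goes through as you indicate, since $2q/3>1$ and $|w_{2-}|,|w_{1+}|$ are bounded below.
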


\section{Reformulation of the problem}
Let $(v^r, u^r)(x/t)$ be centered rarefaction wave for system \eqref{1.4}-\eqref{1.5} which is given in \eqref{3.4}, and let $(V, U)(t,x)$ be the smooth approximations of $(v^r, u^r)(x/t)$. Note that $(V, U)$ are the functions defined in \eqref{3.5} and satisfying \eqref{3.6}. Now, let $\varphi=v-V, \psi=u-U, S= \tilde S- \mu \frac{U_x}{V}$ be perturbations, then 
\begin{align}\label{4.1}
\begin{cases}
 \varphi_t= \psi_x,\\
 \psi_t+[ p(\varphi+V)-p(V)]_x- S_x= \mu  \left( \frac{U_x}{V} \right)_x-g(V)_x,\\
\tau S_t+ S-\frac{\mu}{v} \psi_x=-\tau \mu \left(\frac{ U_x}{V}\right)_t-\frac{ U_x \cdot \varphi}{v V},
\end{cases}
\end{align}
with
\begin{align} \label{4.2}
(\varphi, \psi, S)(x,0)=(\varphi_0, \psi_0, S_0),
\end{align}
where
\begin{align} \label{4.3}
\varphi_0=v_0-V_0, \psi_0=u_0-U_0, S_0= \tilde S_0- \mu \frac{(U_0)_x}{V_0}.
\end{align}
For this reformulated problem \eqref{4.1}-\eqref{4.2}, we have the following theorem of global existence and asymptotic stabiltiy. 
\begin{theorem}\label{th4.1}
Let $(v_+, u_+) \in RR (v_-, u_-)$ and $\delta=|v_+-v_-|+|u_+-u_-|$. Assume the initial data $(\varphi_0, \psi_0,  S_0)\in H^2$ and let $E_0:=\|(\varphi_0, \psi_0, S_0)\|_{H^2}$. Then, there exists a positive constant $\delta_1$ such tht if $E_0+\delta < \delta_1$, the initial value problem \eqref{4.1}-\eqref{4.2} has a unique global solution in time satisfying
\begin{align}
(\varphi, \psi, S) (t,x) \in C^0(0, +\infty, H^2), \label{4.8}\\
(\varphi_x, \psi_x)(t,x) \in L^2 (0, +\infty, H^1),\quad S(t,x) \in L^2(0, +\infty, H^2). \label{4.9}
\end{align}
Moreover, this solution decay to $(0,0,0)$ uniformly in $x$ as $t\rightarrow +\infty$:
\begin{align}\label{4.4}
\|(\varphi, \psi, S)\|_{W^{1, \infty}} \rightarrow 0, \quad \mathrm{as}\quad t\rightarrow +\infty.
\end{align}
\end{theorem}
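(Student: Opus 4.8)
The plan is to prove Theorem \ref{th4.1} by the standard continuation argument: local existence plus uniform a priori estimates yields global existence, and the decay in \eqref{4.4} follows from the integrability in time of the dissipation together with a Sobolev/interpolation argument. I would first state (as in Section 4, Proposition \ref{pro4.2}) the a priori estimate: there exists $\delta_1>0$ such that if a solution exists on $[0,T]$ with $\sup_{[0,T]}\|(\varphi,\psi,S)\|_{H^2}+\delta\le\delta_1$, then in fact
\begin{align*}
\|(\varphi,\psi,S)(t)\|_{H^2}^2+\int_0^t\!\Big(\|\varphi_x(s)\|_{H^1}^2+\|\psi_x(s)\|_{H^1}^2+\|S(s)\|_{H^2}^2\Big)\,\dif s \le C\big(E_0^2+\delta^{1/3}\big)
\end{align*}
uniformly in $T$. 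Local existence for the relaxed hyperbolic–parabolic system \eqref{4.1} is classical; combining it with this estimate and choosing $E_0+\delta$ small closes the continuity argument and gives \eqref{4.8}–\eqref{4.9}.

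The core is therefore the a priori estimate, carried out by $L^2$ energy methods in the order: zeroth order, then first order, then second order. At the zeroth order I would multiply the first equation by $-(p(\varphi+V)-p(V))$ (or use an energy form $\Phi(\varphi,V)=p(V)+p'(V)\varphi-p(\varphi+V)$ type quantity adapted to the $p$-system structure), the second by $\psi$, and the third by $\frac{v}{\mu}S$, then add. The key gain is the term coming from $V_t>0$ (Lemma \ref{le3.2}, part 1): it produces a good term $\int V_t\,\varphi^2\,\dif x$ controlling the wave's expansion, exactly as in Matsumura–Nishihara. The new relaxation terms $\tau S_t$ and the source $-\tau\mu(U_x/V)_t-U_x\varphi/(vV)$ must be handled: $\tau S_t$ integrates to a boundary-in-time term $\frac{\tau}{2}\frac{\dif}{\dif t}\int\frac{v}{\mu}S^2$ plus a commutator involving $v_t=\psi_x$ which is absorbed by smallness; the inhomogeneous terms involving $U_x$, $g(V)_x$ are controlled using the decay and time-integrability bounds in Lemma \ref{le3.2}, parts 3–5, which is where the $\delta^{1/3}$ (from $\int\|g(V)_x\|_{L^1}\lesssim\delta^2\epsilon^{-1}$ optimized in $\epsilon$) enters. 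Differentiating \eqref{4.1} once and twice in $x$ and repeating the procedure gives the higher-order estimates; at each level one needs $\|\varphi_x\|_{H^1}^2$ in the dissipation, which is recovered by a cross term — multiplying the (differentiated) momentum equation by $\varphi_x$ (resp. $\varphi_{xx}$) and using $p'(V)<0$ — exactly the mechanism that in the $\tau=0$ case comes from the viscosity; here it comes from the combination of the relaxed stress equation with the momentum equation, using that $S$ and its derivatives are themselves controlled in $L^2_tL^2_x$.

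The main obstacle I anticipate is closing the dissipative estimate for $\varphi_x$ (and $\varphi_{xx}$) in the relaxed setting. In \eqref{1.6} the viscous term $(\mu u_x/v)_x$ directly yields dissipation $\int\psi_x^2$, and a weighted estimate recovers $\int\varphi_x^2$; with Maxwell's law the velocity gradient is replaced by the extra variable $\tilde S$ satisfying an ODE in $t$, so the parabolic smoothing is lost and one must instead propagate $H^2$ regularity of $(\varphi,\psi)$ and use the relaxation damping $+S$ in the third equation to get $\int\|S\|_{H^2}^2\,\dif t$. The delicate point is that the source $-\tau\mu(U_x/V)_t$ in the $S$-equation involves $U_{xt}$ (hence second derivatives of the approximate wave), whose $L^1_t$-norm from Lemma \ref{le3.2}, part 5, is only $C\delta^{-(p-1)/2pq}$ — bounded but not small — so these terms must be paired with the exponentially-type decaying factors $(1+(\epsilon w_{i\pm}t)^2)^{-q/3}$ and absorbed either by Gronwall or by the smallness of $\delta$ after the $\epsilon$ optimization. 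Once the a priori estimate is established, \eqref{4.4} follows since $\|(\varphi,\psi)\|_{H^2}$ is bounded while $\frac{\dif}{\dif t}\|\varphi_x\|_{H^1}^2\in L^1_t$ and $\|\varphi_x\|_{H^1}^2\in L^1_t$ force $\|\varphi_x\|_{H^1}\to0$; combined with a similar argument for $\psi$ and the control of $S$ in $H^2\cap L^2_tH^2$, interpolation $\|f\|_{L^\infty}\le C\|f\|_{L^2}^{1/2}\|f_x\|_{L^2}^{1/2}$ gives the uniform decay in $W^{1,\infty}$.
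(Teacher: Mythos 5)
Your proposal follows essentially the same route as the paper: a continuation argument built on the a priori estimate of Proposition \ref{pro4.2}, obtained with the same multipliers ($p(V)-p(\varphi+V)$, $\psi$, $\tfrac{v}{\mu}S$ at zeroth order and their $\partial_x^k$ analogues), the good term from $V_t>0$, recovery of the $\varphi_x,\psi_x$ dissipation through the cross terms with the momentum and relaxed stress equations, and the decay \eqref{4.4} via the $W^{1,1}(0,\infty)$-in-time argument plus the interpolation inequality $\|f\|_{L^\infty}\le C\|f\|_{L^2}^{1/2}\|f_x\|_{L^2}^{1/2}$. The only discrepancies are bookkeeping details (the paper fixes $\epsilon=\delta^3$ and gets $\delta^\theta$ with $\theta=\min\{\tfrac12,\tfrac32-\tfrac1q\}$ rather than your $\delta^{1/3}$, and it converts the $(U_x/V)_t$ source to spatial derivatives via \eqref{3.6}), which do not affect the argument.
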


To prove Theorem \ref{th4.1}, the key point is to show the a priori estimate of solutions to the problem  \eqref{4.1}, \eqref{4.2}.  To do this, we introduce the energy norm $E(t)$ as follows
$$E(t)= \sup_{0\le s \le t} \|(\varphi, \psi, S)(s, \cdot)\|_{H^2}+\int_0^t ( \| (\varphi_x, \psi_x, S_x)\|_{H^1}+\|\sqrt{V_t} \varphi\|^2+\|S\|^2 )\dif t. $$

The a priori estimate result is stated as follows.

\begin{proposition}\label{pro4.2}
Let $T>0$ and $(\varphi, \psi, S)(t,x)$ be a solution to the problem \eqref{4.1}, \eqref{4.2} such that 
\[
(\varphi, \psi, S)(t,x) \in C^0([0,T], H^2) \cap C^1([0, T], H^1).
\]
Then, there exists a positive constant $\delta_2$ which is independent of $T$ such that if 
\begin{align}\label{4.6}
E(T)+\delta< \delta_2, 
\end{align}
then the solution $(\varphi, \psi, S)$ satisfies
\begin{align}\label{4.5}
E(t)\le C(E_0+E^\frac{3}{2}(t)+\delta^\theta)
\end{align}
for $t\in (0, T)$ with $\theta=\min\{\frac{1}{2}, \frac{3}{2}-\frac{1}{q}\}$.
\end{proposition}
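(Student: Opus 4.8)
The plan is to prove Proposition \ref{pro4.2} and then run a standard continuation argument: once local solvability in $C^0([0,T],H^2)\cap C^1([0,T],H^1)$ is available, the closed estimate \eqref{4.5} together with the smallness assumption \eqref{4.6} bounds $E(T)$ uniformly in $T$, which yields the global solution and the decay asserted in Theorem \ref{th4.1}. To establish \eqref{4.5} I would carry out $L^2$ energy estimates on the reformulated system \eqref{4.1} and on its first and second $x$-derivatives. Two structural ingredients will be decisive: the expansiveness $V_t>0$ of the smooth rarefaction wave (Lemma \ref{le3.2}), which produces the positive term $\|\sqrt{V_t}\,\varphi\|^2$ and is the only mechanism available to damp $\varphi$ itself; and the relaxation structure of the stress equation in \eqref{4.1}, which — even though $S$ has no spatial smoothing of its own — still permits one to recover parabolic dissipation of $\psi_x$ through its coupling with the momentum equation.

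\textbf{Basic estimate.}
The first step is to multiply the momentum equation in \eqref{4.1} by $\psi$ and, using $\varphi_t=\psi_x$, rewrite $\int[p(\varphi+V)-p(V)]_x\psi\,\dif x$ as $\tfrac{\dif}{\dif t}\int\Phi\,\dif x$ plus a term containing $V_t$, where
\[
\Phi(v,V):=\int_V^v\bigl(p(V)-p(\eta)\bigr)\dif\eta\ge c\,\varphi^2
\]
for $|\varphi|$ small; indeed $\Phi_v+\Phi_V=-[p(v)-p(V)-p'(V)\varphi]\approx-\tfrac12 p''(V)\varphi^2$, so that the $V_t$-term is approximately $\tfrac12\int V_t p''(V)\varphi^2\ge 0$. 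Multiplying the stress equation in \eqref{4.1} by $\tfrac{v}{\mu}S$ makes $-\int S_x\psi\,\dif x=\int S\psi_x\,\dif x$ pair correctly, and after an integration by parts in $t$ on the resulting $\tfrac{\tau}{\mu}\int vSS_t$ term (with $v_t=\psi_x+V_t$), integration over $\mathbb R\times(0,t)$ will yield an identity of the schematic form
\[
\|(\varphi,\psi,S)(t)\|^2+\int_0^t\Bigl(\|\sqrt{V_t}\,\varphi\|^2+\|S\|^2\Bigr)\dif s\le CE_0^2+C\!\!\int_0^t(\mathrm{error})\,\dif s+CE(t)\!\!\int_0^t(\mathrm{diss})^2\,\dif s,
\]
the last term gathering cubic contributions controlled by Sobolev embedding. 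The error terms involve $g(V)_x$, $(U_x/V)_x$, $(U_x/V)_t$ and $U_x\varphi/(vV)$; using the decay and time-integrability estimates of Lemma \ref{le3.2}, Young's inequality, and absorption of part of them into $\|\sqrt{V_t}\,\varphi\|^2$, they will be bounded by $C\delta^{2\theta}$ with $\theta=\min\{\tfrac12,\tfrac32-\tfrac1q\}$ — this is exactly where that exponent arises, from the time integration of the powers of $(1+t)$ in Lemma \ref{le3.2} together with the choice of the smoothing scale $\epsilon$ in terms of $\delta$. A separate multiplication of the stress equation by $\psi_x$, in which the awkward term $\tau\int S_t\psi_x$ is treated via $\tau\tfrac{\dif}{\dif t}\int S\psi_x-\tau\int S\partial_x\psi_t$ and $\psi_t$ is then substituted from the momentum equation, produces the dissipation $\int_0^t\|\psi_x\|^2\,\dif s$ at the price of $\|S_x\|^2$, which is recovered at the next level.

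\textbf{Higher-order estimates.}
Differentiating \eqref{4.1} once and twice in $x$ and repeating these computations will give control of $\|(\varphi_x,\psi_x,S_x)(t)\|^2+\|(\varphi_{xx},\psi_{xx},S_{xx})(t)\|^2$ together with the dissipations $\int_0^t\bigl(\|(\psi_{xx},\psi_{xxx})\|^2+\|(S_x,S_{xx})\|^2\bigr)\,\dif s$. The dissipation for $\varphi_x$ and $\varphi_{xx}$, which are not directly damped, will be extracted from the momentum equation written as $(-p'(V))\varphi_x\approx-[p(\varphi+V)-p(V)]_x+S_x-\psi_t$ plus lower-order terms: a cross term such as $\tfrac{\dif}{\dif t}\int\psi\varphi_x$ and its $x$-derivatives, after use of $\varphi_t=\psi_x$ and the equations, produces $\int(-p'(V))\varphi_x^2$ modulo quantities already controlled. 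The relaxation terms $\tau\partial_x^k S_t$ in the differentiated equations are handled exactly as at zeroth order, by multiplying by $\partial_x^k S$ and integrating by parts in $t$, which is legitimate since $\tau>0$ is fixed. Forming a suitable linear combination of the three levels with small coefficients, reabsorbing the cross terms ($\int\psi\varphi_x$, $\int\psi_x\varphi_{xx}$, $\int S\psi_x$, etc., each $\le\tfrac14$ of the leading energy once $E(T)+\delta$ is small), and taking square roots — which turns $E_0^2$ into $E_0$, $\delta^{2\theta}$ into $\delta^\theta$, and the cubic bound $E(t)\int(\mathrm{diss})^2\lesssim E^3$ into $E^{3/2}$ — will then close \eqref{4.5}.

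\textbf{Main obstacle.}
The essential new difficulty, absent when $\tau=0$ as in \cite{MN1,MN2}, is that $S$ is a purely damped hyperbolic field with no self-smoothing, so the viscous dissipation $\int\mu\psi_x^2/v$ that is immediate for \eqref{1.6} must here be manufactured from the coupling. Concretely, one is forced to control $\tau\int_0^t\!\!\int S_t\psi_x$ and its higher-order analogues by integrating by parts in time and substituting $\psi_t$ from the momentum equation, which spawns boundary-in-time terms and extra spatial-derivative terms that have to be reabsorbed across the derivative hierarchy. Making this bookkeeping close at the second-derivative level, while simultaneously keeping every wave-interaction error under the single rate $\delta^\theta$ and every nonlinear term under $E^{3/2}(t)$, is the technical heart of the proof; the remaining steps are routine once the dissipative structure has been unlocked.
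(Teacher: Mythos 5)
Your proposal follows essentially the same route as the paper: the zeroth-order estimate with the convex pressure functional and the multiplier $\tfrac{v}{\mu}S$ (using $V_t>0$ to damp $\varphi$ and the relaxation term to damp $S$), higher-order estimates by applying $\partial_x^k$, dissipation of $\varphi_x,\varphi_{xx}$ via the cross term with the momentum equation, dissipation of $\psi_x,\psi_{xx}$ by pairing the stress equation with $\partial_x^{k+1}\psi$ and handling $\tau S_t$ through integration by parts in time plus substitution of the momentum equation, and the wave-interaction errors controlled by Lemma \ref{le3.2} with $\epsilon=\delta^3$, giving exactly $\delta^{\theta}$. One small correction: drop the claimed dissipation of $\psi_{xxx}$ — it is not needed for $E(t)$ and cannot be produced at $H^2$ regularity; as in the paper, the stress equation is paired with $\partial_x^{k+1}\psi$ only for $k=0,1$, so the viscous-type dissipation stops at $\psi_{xx}$.
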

We will give the proof of Proposition \ref{pro4.2} in Section 5. 

{\it Proof of Theorem \ref{th4.1}}. Firstly, we choose $\delta_2$ such that $C \delta_2^\frac{1}{2} \le \frac{1}{2}$.  Then, under the assumption \eqref{4.6} and using \eqref{4.5}, we have
\begin{align}\label{4.7}
E(t)\le 2 C (E_0+\delta^\theta).
\end{align}
Now,  we choose $\epsilon_0$ small enough such that $ 2C(\epsilon_0+\epsilon_0^\theta)+\epsilon_0 < \frac{\delta_2}{2}$. Then, we get $E(t)+\delta<\frac{\delta_2}{2}$ which closes the assumption 
\eqref{4.6} by noting that $E_0+\delta<\epsilon_0<\frac{\delta_2}{2}$. Therefore, based on the local existence theorem and the a priori estimate \eqref{4.7}, we can get a global solution in time with regularity \eqref{4.8} and \eqref{4.9} by the classical continuation methods. 

Now, we show the convergence result \eqref{4.4}.  Firstly, by Sobolev interpolation theorem, we have
\begin{align*}
\|(\varphi, \psi, S)\|_{L^\infty} \le C \|(\varphi, \psi, S)\|_{L^2}^\frac{1}{2} \|(\varphi_x, \psi_x, S_x)\|_{L^2}^\frac{1}{2},\\
\|(\varphi_x, \psi_x, S_x)\|_{L^\infty} \le C \|(\varphi_x, \psi_x, S_x)\|_{L^2}^\frac{1}{2} \|(\varphi_{xx}, \psi_{xx}, S_{xx})\|_{L^2}^\frac{1}{2},
\end{align*}
Thus, the convergence $\|(\varphi_x, \psi_x, S_x)\|_{L^2} \rightarrow 0$ as $t\rightarrow +\infty$  implies \eqref{4.4} immediately.  Let $\Psi(t):= \|(\varphi_x, \psi_x, S_x)(t, \cdot)\|_{L^2}^2$. Then, by \eqref{4.9}, $\Psi(t)\in L^1(0,+\infty)$. From the system \eqref{4.1}, we can easily get  $\Psi^\prime(t)\in L^1(0,+\infty)$. Therefore, we get $\Psi(t) \in W^{1,1}(0, +\infty)$ which implies $\Psi(t)\rightarrow 0$ as $t\rightarrow +\infty$.  Thus the proof of Theorem \ref{th4.1} is complete.

Finally, by using Theorem \ref{th4.1}, we are able to show Theorem \ref{th1.1} hold. 

{\it Proof of Theorem 2.1.} Assume that $I_0+\delta$ is suitable small where
\[
I_0=\|(v_0-v_0^r, u_0-u_0^r)\|_{L^2}+\|((v_0)_x, (u_0)_x)\|_{H^1}+ \|\tilde S_0\|_{H^2},
\]
then we have
\begin{align*}
\|(\varphi_0, \psi_0)\|_{L^2}\le \|(v_0-v_0^r, u_0-u_0^r)\|_{L^2}+\|(v_0^r-V_0, u_0^r- U_0)\|_{L^2}\le I_0+C \delta,\\
\|\partial_x( \varphi_0, \psi_0)\|_{H^1}+\|S_0\|_{H^2}\le \|\partial_x(v_0, u_0)\|_{H^1}+\|\tilde S_0\|_{H^2}+ \|\partial_x(V_0, U_0)\|_{H^1}+ \|\mu \frac{\partial_xU_0}{V_0}\|_{H^2}\le I_0+C \delta.
\end{align*}
Therefore, we derive that $E_0+\delta\le I_0+C\delta$ which can be sufficiently small.  Therefore, by using Theorem \ref{4.1}, we get a unique global solution $(\varphi, \psi, S)$ to the problem \eqref{4.1}, \eqref{4.2}. Then, the functions defined by $(v, u, \tilde S)=(\varphi+V, \psi+U, S+\mu \frac{U_x}{V})$ solves the original problem \eqref{1.1}-\eqref{1.3}.  To show the convergence \eqref{2.2} hold, we note that
\[
\|(v-v^r, u-u^r, \tilde S)\|_{L^\infty} \le \|(\varphi, \psi, S)\|_{L^\infty}+\|(V-v^r, U-u^r, \mu \frac{U_x}{V})\|_{L^\infty} \rightarrow 0
\]
as $t\rightarrow +\infty$, where we used the convergence result \eqref{4.4} and Lemma \ref{le3.2}. This completes the proof of Theorem \ref{th1.1}.

\section{Proof of Proposition \ref{pro4.2}}
In the following lemmas, we always assume
that $E(t)+\delta \le \delta_2$ for some small $\delta_2$ and $\epsilon=\delta^3$. In particular, we have $0<V_-\le V\le V_+$ and $0<v_-\le v\le v_+$. Firstly, we get the following $L^2$ estimates.
\begin{lemma} \label{le5.1}
There exists a constant $C$ such that for $0\le t\le T$,
\begin{align}\label{5.1}
\|\varphi\|_{L^2}^2+\|\psi\|_{L^2}^2+\| S\|_{L^2}^2 +\int_0^t ( \|\sqrt{V_t} \varphi\|_{L^2}^2+\|S\|_{L^2}^2)\dif t \le C(E_0+\delta^\theta),
\end{align}
where $\theta=\min \{ \frac{1}{2}, \frac{3}{2}-\frac{1}{q} \}.$
\end{lemma}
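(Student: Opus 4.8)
The plan is to carry out the basic $L^2$ energy estimate for the system \eqref{4.1} by constructing an energy-type functional, multiplying the equations by suitable quantities, integrating over $\mathbb R$, and integrating by parts. First I would introduce the relative potential energy $\Phi(v,V) := \int_V^v \big(p(V)-p(\eta)\big)\,\dif\eta$, which is nonnegative and comparable to $\varphi^2$ for $v$ in the compact range $[v_-,v_+]$ (here one uses $p'<0$). Multiplying the first equation of \eqref{4.1} by $p(V)-p(v) = p(V)-p(\varphi+V)$, the second by $\psi$, and the third by $\frac{1}{\mu}v\,S$ (or by $S$ after suitable manipulation to handle the $\frac{\mu}{v}\psi_x$ term), and adding, I expect the leading-order terms to assemble into $\frac{\dif}{\dif t}\int \big(\Phi(v,V) + \tfrac12\psi^2 + \tfrac{\tau}{2\mu}vS^2\big)\,\dif x$ plus the good dissipation term $\frac{1}{\mu}\int v\,S^2\,\dif x \gtrsim \|S\|_{L^2}^2$ coming from the relaxation term $S$ in the third equation.

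Next, the cross terms and the terms involving derivatives of the background profile $(V,U)$ must be controlled. The key favorable structure is the term $-\int p'(V)\,V_t\,|\,\cdot\,|^2$-type contribution arising when one differentiates $\Phi(v,V)$ in $t$ and uses $V_t>0$ from Lemma \ref{le3.2}; this produces exactly the good term $\|\sqrt{V_t}\,\varphi\|_{L^2}^2$ on the left-hand side. The remaining error terms fall into three groups: (i) terms quadratic in the perturbation multiplied by $V_x$ or $U_x$, which are absorbed using $\|V_x\|_{L^\infty},\|U_x\|_{L^\infty}\le C\epsilon\delta$ and the smallness of $E(t)$, together with the good term $\|\sqrt{V_t}\,\varphi\|^2$ (recall $|V_x|\le CV_t$); (ii) the inhomogeneous terms $\mu(U_x/V)_x$, $g(V)_x$ from the second equation and $-\tau\mu(U_x/V)_t$, $-U_x\varphi/(vV)$ from the third, which are linear in the perturbation and must be estimated via Cauchy-Schwarz after establishing the time-integrability bounds $\int_0^\infty\|g(V)_x\|_{L^2}\,\dif t$, $\int_0^\infty\|\partial_x^k(V,U)\|_{L^p}\,\dif t$ from Lemma \ref{le3.2} parts 3)--5); (iii) terms cubic or higher in the perturbation, absorbed by $E(t)\le\delta_2$. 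Integrating the resulting differential inequality in time and using $E_0 = \|(\varphi_0,\psi_0,S_0)\|_{H^2}$ for the data yields \eqref{5.1}, with the $\delta^\theta$ on the right coming from squaring and integrating the profile-error bounds; the exponent $\theta=\min\{\tfrac12,\tfrac32-\tfrac1q\}$ arises from optimizing the bounds in Lemma \ref{le3.2} with the choice $\epsilon=\delta^3$.

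The main obstacle I anticipate is the treatment of the stress equation: unlike the classical case $\tau=0$ where $\tilde S = \mu u_x/v$ is slaved to the velocity gradient, here $S$ is an independent variable satisfying a relaxation ODE in $t$, so one does not directly get a $\|\psi_x\|^2$ dissipation term at this level — only $\|S\|^2$. Coupling the $\frac{\mu}{v}\psi_x$ term in the third equation with the $-S_x$ term in the second equation must be done carefully (an integration by parts transfers a derivative, and the factor $v$ in $\frac{\mu}{v}\psi_x$ has to be commuted out at the cost of a $\varphi\cdot\psi_x$ error term controlled by smallness). A further delicate point is that the time-derivative forcing $-\tau\mu(U_x/V)_t$ in the third equation, after multiplying by $S$, should be integrated by parts in $t$ rather than estimated directly, so as not to lose a time derivative of the profile; this generates a boundary-in-time term $\big[\,\tau\mu\int (U_x/V)\,S\,\dif x\,\big]_0^t$ that is handled by Cauchy-Schwarz and absorbed, plus a term with $S_t$ that is rewritten using the third equation itself. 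Once these manipulations are organized, the remaining estimates are routine applications of Cauchy-Schwarz, Sobolev embedding $H^1\hookrightarrow L^\infty$, and the decay/integrability properties collected in Lemmas \ref{le3.1}--\ref{le3.2}.
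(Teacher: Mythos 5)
Your overall strategy coincides with the paper's: the same three multipliers $p(V)-p(\varphi+V)$, $\psi$ and $\frac{v}{\mu}S$, the same quadratic energy (the relative potential energy plus $\tfrac12\psi^2+\tfrac{\tau v}{2\mu}S^2$), the dissipation $\int \frac{v}{\mu}S^2\,\dif x$ coming from the relaxation term, and the good term $\|\sqrt{V_t}\,\varphi\|_{L^2}^2$ produced by the convexity of $p$ together with $V_t>0$; the inhomogeneous terms are then handled through the decay and time-integrability bounds of Lemma \ref{le3.2} with $\epsilon=\delta^3$, exactly as in the paper's proof of \eqref{5.1}. One small remark: with the multiplier $\frac{v}{\mu}S$ the coupling term $\frac{\mu}{v}\psi_x$ cancels exactly against $-S_x\psi$ after one integration by parts in $x$, so the ``$\varphi\cdot\psi_x$ commutation error'' you anticipate does not actually arise.

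The one place where your plan deviates from the paper --- the treatment of the forcing $-\tau\mu\left(\frac{U_x}{V}\right)_t$ in the stress equation --- is also the place where it would get stuck as written. If you integrate by parts in time and then replace $S_t$ using the relaxation equation itself, the $-S$ part of that equation produces the bulk term $\int_0^t\!\int_{\mathbb R} vS\,\frac{U_x}{V}\,\dif x\,\dif s$. The only dissipation available at this $L^2$ level is $\int_0^t\|S\|_{L^2}^2\,\dif s$ (there is no $\psi_x$ dissipation yet), so after Young's inequality you are left with $C\int_0^t\|U_x\|_{L^2}^2\,\dif s$; but by Lemma \ref{le3.2}, $\|U_x(t)\|_{L^2}^2\lesssim \delta(1+t)^{-1}$ is the best available decay, which is not integrable in time (the integral grows like $\delta\log(1+t)$), so the estimate does not close uniformly in $t$. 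The paper avoids this entirely: it keeps the term $\tau vS\left(\frac{U_x}{V}\right)_t$ and uses the profile equations \eqref{3.6} (namely $U_t=-p(V)_x+g(V)_x$ and $V_t=U_x$) to rewrite $\left(\frac{U_x}{V}\right)_t=\frac{g(V)_{xx}-p(V)_{xx}}{V}-\frac{U_x^2}{V^2}$, i.e.\ to trade the time derivative for spatial derivatives of the smooth profile; the squared $L^2$ norms of these quantities decay fast enough to be integrable in time, so Young's inequality with a small multiple of $\int\|S\|^2$ closes the bound and yields the $\delta^\theta$ contribution. You would need to adopt this substitution (or an equivalent way of converting $\partial_t$ of the profile into $\partial_x$) for your argument to go through; the rest of your outline matches the paper's proof.
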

\begin{proof}
Multiplying the equations $\eqref{4.1}_1, \eqref{4.1}_2, \eqref{4.1}_3$ by $p(V)-p(\varphi+V)$, $\psi$ and $\frac{v}{\mu} S$, respectively, and integrating over $\mathbb R$ with respect to $x$, we get
\begin{align}\label{5.2}
\frac{\dif}{\dif t} E_1(\varphi, \psi, S) +E_2(\varphi, S)=\int_{\mathbb R} \left( \mu \psi \left(\frac{U_x}{V}\right)_x-\psi g(V)_x+\tau v S \left( \frac{U_x}{V}\right)_t +\frac{\tau v_t}{2\mu} S^2\right) \dif x,
\end{align}
where 
\[
E_1(\varphi, \psi, S)=\int_{\mathbb R} \left( p(V)\varphi- \int_V^{V+\varphi} p(\xi)\dif \xi +\frac{1}{2} \psi^2 +\frac{\tau V}{2\mu} S^2 \right)\dif x
\]
and
\[
E_2(\varphi, S)=\int_{\mathbb R} \left(\frac{v}{\mu} S^2-\frac{V_t \varphi S}{V}+(p(V+\varphi)-p(V)-p^\prime (V) \varphi ) V_t \right) \dif x.
\]
Note that 
\[
p(V)\varphi-\int_V^{V+\varphi} p(\xi)\dif \xi \ge C \varphi^2,
\]
since $p(\cdot)$ is a convex function on $(0, +\infty)$. Therefore, we derive that
\begin{align}\label{5.3}
E_1(\varphi, \psi, S) \ge C \|(\varphi, \psi, S)\|_{L^2}^2.
\end{align}
On the other hand, let $p(V+\varphi)-p(V)-p^\prime (V) \varphi= f(v, V) \varphi^2$, then $f(v, V)\ge C >0$. Since $V_t\le C \epsilon \delta$ where $\epsilon$ and $\delta$ are sufficiently small, we have
\begin{align}\label{5.4}
 \frac{v}{\mu} S^2 -\frac{V_t \varphi S}{V} +(p(V+\varphi)-p(V)-p^\prime(V) \varphi) V_t \ge C( S^2+ V_t \varphi^2).
\end{align}
Now, we estimate the right-hand-side of \eqref{5.2}. Firstly, 
using Lemma \ref{3.2}, we get
\begin{align*}
&\int_{\mathbb R} \left| \mu \psi \left( \frac{U_x}{V} \right)_x \right| \dif x\le C \int_{\mathbb R} \left(|\psi U_{xx}|+|\psi U_x V_x|\right) \dif x\\
&\le C  \|\psi\|_{L^2} \left(\|U_{xx}\|_{L^2}+ \|U_x\|_{L^4} \|V_x\|_{L^4}\right)\le C \left( \delta^{\frac{3}{2}-\frac{1}{q}}(1+t)^{-1-\frac{1}{4q}}+\delta^\frac{1}{2} (1+t)^{-\frac{3}{2}} \right) \| \psi\|_{L^2}
\end{align*}
and
\begin{align*}
\int_{\mathbb R} |\psi g(V)_x| \dif x \le \|\psi\|_{L^2} \| g(V)_x\|_{L^2}.
\end{align*}
Moreover, using \eqref{3.6} and Lemma \ref{3.2}, we have
\begin{align*}
\int_{\mathbb R} \left| \tau v S \left(\frac{U_x}{V}\right)_t \right| \dif x &\le C \int_{\mathbb R} S(|g(V)_{xx}|+|p(V)_{xx}|+U_x^2) \dif x \\
&\le \varepsilon \int_{\mathbb R} S^2 \dif x +C(\varepsilon)( \|V_x\|_{L^4}^4+\|V_{xx}\|_{L^2}^2+\|U_x\|_{L^4}^4 ) \\
&\le C \varepsilon \int_{\mathbb R} S^2 \dif x +C \delta (1+t)^{-3}+( \delta^{\frac{3}{2}-\frac{1}{q}}(1+t)^{-1-\frac{1}{4q}}+\delta^\frac{1}{2} (1+t)^{-\frac{3}{2}})^2.
\end{align*}
Besides, note that $\|v_t\|_{L^\infty}=\|V_t+\varphi_t\|_{L^\infty} \le C (\epsilon \delta+\delta_1)$, we derive that
\begin{align*}
\int_{\mathbb R} \frac{\tau v_t}{2\mu} S^2 \dif x \le C (\delta^4+\delta_2) \int_{\mathbb R} S^2 \dif x,
\end{align*}
which can be absorbed by $E_2(\varphi, S)$ for suitable small $\delta$ and $\delta_2$.
Combining the above estimates,  and integrating \eqref{5.2} over $(0, t)$, we derive that
\begin{align*}
&\|\varphi\|_{L^2}^2+ \|\psi\|_{L^2}^2+\|S\|_{L^2}^2 +\int_0^t (\|\sqrt{V_t} \varphi\|_{L^2}^2+\|S\|_{L^2}^2 )\dif s\\
&\le CE_0+C \int_0^t ( \delta^\frac{1}{2} (1+t)^{-\frac{3}{2}}+\delta^{\frac{3}{2}-\frac{1}{q}}(1+t)^{-1-\frac{1}{4q}}+\|g(V)_x\|_{L^2}) \|\psi\|_{L^2} \dif s+ C \delta+ C \delta^{3-\frac{2}{q}}\\
& \le C(E_0+\delta^\theta),
\end{align*}
where we have used the fact that $\int_0^{+\infty} \|g(V)_x\|_{L^2} \dif t \le C \delta^2 \epsilon^{-\frac{1}{2}}=C \delta^\frac{1}{2}$ from Lemma \ref{le3.2}. Therefore, the proof of this lemma is completed.
\end{proof}

\begin{lemma}\label{le5.2}
There exists a constant $C$ such that for $0\le t\le T$, we have
\begin{align} \label{5.5}
\| \varphi_x\|_{H^1}^2+ \|\psi_x\|_{H^1}^2+ \| S_x\|_{H^1}^2 +\int_0^t \|S_x\|_{H^1}^2 \dif t \le C(E_0+\delta^\theta+E^\frac{3}{2}(t)).
\end{align}
\end{lemma}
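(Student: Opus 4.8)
The plan is to derive higher-order energy estimates by differentiating the system \eqref{4.1} in $x$ and performing weighted $L^2$ energy methods, closing the estimate in two stages: first for $(\varphi_x,\psi_x,S_x)$ and then for the second derivatives $(\varphi_{xx},\psi_{xx},S_{xx})$. For the first stage I would differentiate $\eqref{4.1}_1$–$\eqref{4.1}_3$ once in $x$, multiply the resulting equations by suitable multipliers — essentially $-(p'(v))\varphi_x$ (or $p(v)_x$-type weights to keep the nonlinear pressure term in divergence form), $\psi_x$, and $\tfrac{v}{\mu}S_x$ respectively — integrate over $\mathbb{R}$, and sum. The dissipative structure should produce, after integration in $t$, control of $\int_0^t\|S_x\|_{L^2}^2\,\dif t$ together with $\|(\varphi_x,\psi_x,S_x)(t)\|_{L^2}^2$ on the left; the parabolic-type term $\tfrac{\mu}{v}\psi_x$ in $\eqref{4.1}_3$ is what feeds dissipation of $\psi_x$ back in, but since here $S$ carries the viscosity via the relaxation equation, I expect the genuine dissipation to come through $S$ and $S_x$, with $\varphi_x$ controlled only through the weighted term $\|\sqrt{V_t}\,\varphi_x\|^2$ and a Poincaré-type / equation-substitution argument using $\varphi_t=\psi_x$.

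The error terms split into three types: (i) terms involving the rarefaction profile derivatives $U_x$, $U_{xx}$, $V_x$, $V_{xx}$ and $g(V)_x$, $g(V)_{xx}$ — these are handled exactly as in Lemma \ref{le5.1}, using the decay and integrability bounds from Lemma \ref{le3.2}, and contribute the $\delta^\theta$ on the right-hand side; (ii) quadratic and cubic interaction terms such as $\int \psi_x^2\varphi_x\,\dif x$, $\int \varphi_x\psi_x V_x\,\dif x$, etc., coming from the nonlinearity of $p$ and from the coefficient $\tfrac{1}{v}=\tfrac{1}{\varphi+V}$ — these are bounded using $\|\cdot\|_{L^\infty}\le C\|\cdot\|_{L^2}^{1/2}\|\partial_x\cdot\|_{L^2}^{1/2}$ together with the smallness $E(T)+\delta\le\delta_2$, and produce the $E^{3/2}(t)$ term; (iii) terms where a time derivative falls on $U_x/V$, i.e. $\partial_t(U_x/V)$ and its $x$-derivative, which by \eqref{3.6} are rewritten in terms of spatial derivatives of $p(V)$, $g(V)$ and $U_x^2$, again absorbed via Lemma \ref{le3.2}. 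For the second-order stage I would differentiate once more in $x$ and repeat with multipliers $\psi_{xx}$, $\tfrac{v}{\mu}S_{xx}$, and the appropriate weight for $\varphi_{xx}$; the new feature is commutator terms from differentiating the variable coefficients $\tfrac{\mu}{v}$ and $p'(v)$ twice, but these are all of type (ii) and hence $O(E^{3/2})$, or type (i) and hence $O(\delta^\theta)$.

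A recurring technical point I would handle carefully is recovering dissipation for $\varphi_x$ (and $\varphi_{xx}$): since the momentum equation relates $\psi_t + [p(\varphi+V)-p(V)]_x = S_x + (\text{profile})$, one can multiply a combination like $\psi\varphi_x$ or $\psi_x\varphi_{xx}$ (the classical Kawashima-type interaction multiplier) to transfer the dissipation of $\psi_x, S_x$ into control of $\|\varphi_x\|^2$ modulo lower-order and profile terms; I would include an auxiliary estimate of the form $\frac{\dif}{\dif t}\int \psi\varphi_x\,\dif x + c\|\varphi_x\|^2 \le C(\|\psi_x\|^2 + \|S_x\|^2 + \text{errors})$ and add a small multiple of it to the main inequality. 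The main obstacle, as usual in rarefaction-wave stability, is the borderline time-integrability of the profile terms: the bounds in Lemma \ref{le3.2} give decay rates like $(1+t)^{-1-\frac{p-1}{2pq}}$ whose $L^1_t$ norms are only barely finite and force the restriction $\epsilon=\delta^3$ and the exponent $\theta=\min\{\tfrac12,\tfrac32-\tfrac1q\}$; keeping track of which power of $\delta$ each term carries after applying Young's inequality (so that nothing worse than $\delta^\theta$ or $E^{3/2}$ survives) is the delicate bookkeeping, and the choice of $\varepsilon$ in the Young splittings must be uniform in $T$. Once both stages are combined and the small terms proportional to $\int_0^t\|(\varphi_x,\psi_x)\|_{H^1}^2$ on the right are absorbed using $E(T)+\delta\le\delta_2$, the estimate \eqref{5.5} follows.
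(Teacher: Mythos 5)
Your proposal follows essentially the same route as the paper: apply $\partial_x^k$ ($k=1,2$) to \eqref{4.1}, multiply by $-p'(\varphi+V)\partial_x^k\varphi$, $\partial_x^k\psi$ and $\tfrac{v}{\mu}\partial_x^k S$, obtain genuine dissipation only of $\partial_x^k S$, and bound the commutator, nonlinear and profile terms via Lemma \ref{le3.2}, Sobolev interpolation and the smallness assumption, producing exactly the $\delta^\theta$ and $E^{\frac32}(t)$ contributions (the paper closes the $(1+t)^{-d}$-weighted terms by Gronwall's inequality). The Kawashima-type cross-multiplier estimate you add to recover dissipation of $\varphi_x$ is not needed for \eqref{5.5} itself, since its left-hand side only contains $\int_0^t\|S_x\|_{H^1}^2\,\dif t$; in the paper that argument is deferred to the separate Lemma \ref{le5.3}.
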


\begin{proof}
By applying $\partial_x^k (k=1, 2)$ to the equations \eqref{4.1}, we derive the following system
\begin{align}\label{5.6}
\begin{cases}
\partial_t \partial_x^k \varphi =\partial_x^{k+1} \psi,\\
\partial_t \partial_x^k \psi+\partial_x^k(p^\prime(\varphi+V)\varphi_x)+\partial_x^k( (p^\prime(\varphi+V)-p^\prime(V))V_x)-\partial_x^{k+1} S=\mu \partial_x^{k+1} \left(\frac{U_x}{V}\right)-\partial_x^{k+1}g(V),\\
\tau \partial_t \partial_x^k S+\partial_x^k S-\partial_x^k \left(\frac{\mu}{v} \psi_x\right)=-\tau \mu \partial_t \partial_x^k \left(\frac{ U_x}{V}\right)-\partial_x^k\left(\frac{U_x \varphi}{vV}\right).
\end{cases}
\end{align}
Multiplying the above equations by $-p^\prime(\varphi+V) \partial_x^k \varphi$, $\partial_x^k \psi$, $\frac{v}{\mu}\partial_x^kS$, 
respectively, and integrating over $\mathbb R$, we get
\begin{align*}
\frac{\dif}{\dif t} \int_{\mathbb R} \left( -\frac{1}{2} p^\prime(\varphi+V) (\partial_x^k \varphi)^2+\frac{1}{2}(\partial_x^k\psi)^2+\frac{\tau v}{2\mu} (\partial_x^k S)^2 \right)\dif x
+\int_{\mathbb R} \left( \frac{v}{\mu}-\frac{\tau}{2\mu} \partial_t v\right) (\partial_x^k S)^2 \dif x=: \SUM j 1 8 R_k^j,
\end{align*}
where
\[
R_k^1=-\int_{\mathbb R} \frac{1}{2} p^{\prime \prime}(\varphi+V)( (\varphi_t + V_t) (\partial_x^k\varphi)^2+( \varphi_x+ V_x) \partial_x^k \psi \partial_x^k \varphi ) \dif x,\]
\[
R_k^2=- \int_{\mathbb R} \left( \partial_x^k (p^\prime(\varphi+V) \varphi_x)-p^\prime(\varphi+V) \partial_x^{k+1} \varphi\right) \partial_x^k \psi \dif x,
\]
\[
R_k^3=-\int_{\mathbb R} \partial_x^k (( p^\prime(\varphi+V)-p^\prime(V))V_x) \partial_x^k \psi \dif x, \quad R_k^4=\int_{\mathbb R} \mu \partial_x^{k+1} \left( \frac{ U_x}{V}\right)
\partial_x^k \psi \dif xV_x,
\]
\[
R_k^5=-\int_{\mathbb R} \partial_x^{k+1} g(V) \partial_x^k \psi \dif x, R_k^6=-\int_{\mathbb R} \tau \mu \partial_t \partial_x^{k}\left(\frac{ U_x}{V}\right) \partial_x^k S \dif x,
R_k^7=\int_{\mathbb R} \partial_x^k \left( \frac{U_x \varphi}{v V}\right) \partial_x^k S \dif x,
\]
\[
R_k^8=\int_{\mathbb R} ( \partial_x^k (\frac{\mu}{v} \psi_x)-\frac{\mu}{v} \partial_x^{k+1} \psi) \frac{v}{\mu} \partial_x^k S \dif x.
\]
We shall show that  for $1\le j \le 8, k=1, 2$,
\begin{align}\label{5.7}
\int_0^t R_k^j(t) \dif t \le C(\delta^\theta+E^\frac{3}{2}(t))+\varepsilon \int_0^t \| S_x\|_{H^1}^2 \dif t+C \int_0^t (1+t)^{-d} (\|\varphi_x\|_{H^1}^2+\| \psi_x\|_{H^1}^2) \dif t.
\end{align}
Firstly, 
\begin{align*}
\int_0^t R_k^1 \dif t \le C \int_0^t\int_{\mathbb R}( |\psi_x|+|\varphi_x|+|V_x|)((\partial_x^k \varphi)^2+(\partial_x^k \psi)^2 )\dif x \dif t \le C(E^\frac{3}{2}(t)+\delta^4).
\end{align*}
Secondly, we have for $k=1$ that
\begin{align*}
\int_0^t R_1^2 (t) \dif t =\int_0^t\int_{\mathbb R} p^{\prime \prime}(\varphi +V)(\varphi_x+V_x) \varphi_x \psi_x \dif x \dif t
\le C (E^\frac{3}{2}(t)+\delta^4).
\end{align*}
For $k=2$,
\begin{align*}
&\int_0^t R_2^2(t)\dif t\\
&=\int_0^t \int_{\mathbb R} \left( p^{\prime\prime\prime}(\varphi+V)(\varphi_x+V_x)^2 \varphi_x+p^{\prime\prime}(\varphi+V)( 3 \varphi_{xx}\varphi_x  +V_{xx} \varphi_x+2 V_x \varphi_{xx} )\right)\psi_{xx} \dif x \dif t\\
&\le C\int_0^t \|(\varphi_x, V_x, V_{xx})\|_{L^\infty} ( \|\varphi_x\|_{H^1}^2+\|\psi_x\|_{H^1}^2) \dif t\\
&\le C( E^\frac{3}{2}(t)+\delta^4+\delta^{3-\frac{2}{q}})+ C \int_0^t (1+t)^{-2} ( \|\varphi_x\|_{H^1}^2+\|\psi_x\|_{H^1}^2) \dif t.
\end{align*}
Similarly, for $k=1$, we have
\begin{align*}
&\int_0^t R_1^3 (t)\dif t\\
&=\int_0^t \int_{\mathbb R} \left(  (p^{\prime\prime}(\varphi+V)(\varphi_x+V_x)-p^{\prime\prime} (V)V_x) V_x+(p^\prime(\varphi+V)-p^\prime(V))  V_{xx}        \right) \psi_x \dif x \dif t\\
&\le C\int_0^t \left(  \|V_x\|_{L^\infty} ( \|\varphi_x\|_{H^1}^2+\|\psi_x\|_{H^1}^2 )+(\|V_x\|_{L^4}^2+\|V_{xx}\|_{L^2}) \|\partial_x\psi\|_{L^2} \right) \dif t\\
&\le C( \delta^4+ \delta^{\frac{3}{2}-\frac{1}{q}}+\delta^\frac{1}{2})
\end{align*}
and for $k=2$, 
\begin{align*}
&\int_0^t R_2^3 (t)\dif t \\
&\le C \int_0^t \left((E^\frac{1}{2}(t)+\|V_x\|_{L^\infty}+\|V_{xx}\|_{L^\infty}) ( \|\varphi_x\|_{H^1}^2+\|\psi_x\|_{H^1}^2 )+ C ( \|(V_x)^3 \|_{L^2}+\|V_{xx} V_x\|_{L^2}) \|\psi_{xx}\|_{L^2}  \right)\dif t\\
&\le C( E^\frac{3}{2}(t)+\delta^4+\delta^{3-\frac{2}{q}}+\delta^\frac{1}{2}+\delta^{\frac{11}{2}-\frac{1}{q}}).
\end{align*}
Now, we estimate $R_k^4$. For $k=1$, 
\begin{align*}
&\int_0^t R_1^4(t)\dif t=\int_0^t \int_{\mathbb R} \mu  \left( \frac{ U_x}{V} \right)_{xx} \psi_x \dif x\\
&=\int_0^t \int_{\mathbb R} \mu \left( \frac{U_{xxx}}{V}-\frac{2 U_{xx} V_x}{V^2}-\frac{U_x V_{xx}}{V^2}+\frac{2 V_x^2 U_x}{V^3}\right) \psi_x \dif x\\
&\le C \int_0^t ( \|U_{xxx}\|+\|U_{xx}\|_{L^4} \|V_x\|_{L^4}+\|V_{xx}\|_{L^4} \|U_x\|_{L^4}+ \|V_x^2\|_{L^4} \|U_x\|_{L^4}) \|\psi_x\| \dif t\\
&\le C\int_0^t (\delta^{\frac{3}{2}-\frac{1}{q}}(1+t)^{-1-\frac{1}{4q}}+\delta^\frac{1}{2} (1+t)^{-\frac{5}{2}}) \|\psi_x\| \dif t \le C (\delta^{\frac{3}{2}-\frac{1}{q}} +\delta^\frac{1}{2} ).
\end{align*}
For $k=2$, some tedious calculations give
\begin{align*}
&\partial_x^3 \left(\frac{U_x}{V}\right)\\
&= \frac{\partial_x^4 U}{V}-\frac{ 3 \partial_x^3 U V_x+3 U_{xx} V_{xx}+U_x \partial_x^3 V}{V^2}
+\frac{6 U_{xx} (V_x)^2+6 U_x  V_{xx} V_x}{V^3}-\frac{6 U_x (V_x)^3}{V^4}.
\end{align*}
So, we have
\begin{align*}
&\int_0^t R_2^4 (t)\dif t =\int_0^t \int_{\mathbb R} \mu \partial_x^3 \left(\frac{U_x}{V}\right) \psi_{xx} \dif x\\
&\le C \int_0^t \left( \|\partial_x^4 U\|_{L^2}+\|\partial_x^3 U V_x\|_{L^2}+ \|U_{xx}V_{xx}\|_{L^2}+ \|U_x \partial_x^3 V\|_{L^2}+\|U_x V_{xx} 
V_x\|_{L^2}+\|U_x (V_x)^3\|_{L^2}\right) \|\psi_{xx}\|_{L^2} \dif t\\
&\le C \int_0^t ( \delta^{\frac{3}{2}-\frac{1}{q}}(1+t)^{-1-\frac{1}{4q}}+\delta^\frac{1}{2} (1+t)^{-\frac{3}{2}}+\delta^{\frac{9}{2}-\frac{3}{q}}(1+t)^{-2-\frac{3}{4q}}
+\delta^\frac{9}{2} (1+t)^{-\frac{5}{2}}) \|\psi_{xx}\|_{L^2} \dif t \\
&\le C( \delta^{\frac{3}{2}-\frac{1}{q}}+\delta^\frac{1}{2}).
\end{align*}
Similarly, for $k=1, 2$, we can get
\begin{align*}
\int_0^t R_k^5 \dif t& \le C \int_0^t ( \|((V_x)^2, (V_x)^3)\|_{L^2}+\|(V_{xx}, \partial_x^3 V)\|_{L^2}+\|V_x V_{xx}\|_{L^2}) \|(\partial_x, \partial_x^2) \psi\|_{L^2} \dif t\\
&\le C \int_0^t ( \delta^{\frac{3}{2}-\frac{1}{q}} (1+t)^{-1-\frac{1}{4q}}+\delta^\frac{1}{2} (1+t)^{-\frac{3}{2}}) \|(\partial_x, \partial_x^2)\psi\|_{L^2} \dif t\le C(\delta^{\frac{3}{2}-\frac{1}{q}}+\delta^\frac{1}{2}).
\end{align*}
On the other hand, $\int_0^t R_k^6 (t)\dif t$ and $\int_0^t R_k^7(t)\dif t$ are estimated as follows. Note that for $k=1$, we have
\begin{align*}
\left( \frac{U_x}{V}\right)_{tx} =\frac{ (U_{xxt} V+U_{xt} V_x-U_{xx}V_t-U_xV_{tx})V^2-(U_{xt}V-U_x V_t)2V V_x}{V^4},\\
\left(\frac{U_x \varphi}{V v} \right)_x=\frac{ (U_{xx} \varphi+U_x \varphi_x)V v-U_x \varphi (V_x v+V v_x)}{ V^2 v^2}.
\end{align*}
So, we derive that
\begin{align*}
&\int_0^t R_1^6 (t)\dif t \le \varepsilon \int_0^t \int_{\mathbb R} \frac{v}{\mu} S_x^2 \dif x \dif t +C(\varepsilon) \int_0^t \int_{\mathbb R} \left| \left(\frac{U_x}{V}\right)_{tx} ^2 \right| \dif x \dif t\\
&\le  \varepsilon \int_0^t \int_{\mathbb R} \frac{v}{\mu} S_x^2 \dif x \dif t +C(\varepsilon)  \int_0^t \int_{\mathbb R}| V_{xxx}+V_{xx}V_x+V_{x}^3+U_{xx} U_x+U_x^2V_x |^2 \dif x \dif t\\
&\le \varepsilon  \int_0^t \int_{\mathbb R} \frac{v}{\mu} S_x^2 \dif x \dif t+C(\varepsilon) \int_0^t (\|V_x\|_{L^6}^6+\|V_x\|_{L^4}^2 (\|V_{xx}\|_{L^4}^2+\|U_{xx}\|_{L^4}^2+\|U_x\|_{L^8}^4)+\|V_{xxx}\|_{L^2}^2)\dif t\\
&\le \varepsilon  \int_0^t \int_{\mathbb R}\frac{v}{\mu} S_x^2 \dif x\dif t+\int_0^t ( \delta (1+t)^{-3}+\delta^{5-\frac{3}{q}}(1+t)^{-\frac{7}{2}-\frac{3}{4q}}+\delta^{3-\frac{2}{q}}(1+t)^{-2-\frac{1}{2q}} )\dif t \\
&\le \varepsilon  \int_0^t \int_{\mathbb R}\frac{v}{\mu} S_x^2 \dif x\dif t+\delta+\delta^{3-\frac{2}{q}} .
\end{align*}
and
\begin{align*}
&\int_0^t R_1^7 (t) \dif t \le \varepsilon   \int_0^t \int_{\mathbb R}\frac{v}{\mu} S_x^2 \dif x \dif t+C(\varepsilon) \int_0^t \int_{\mathbb R}  | U_{xx}\varphi+U_x \varphi_x+U_x V_x \varphi+U_x \varphi \varphi_x|^2 \dif x\dif t \\
&\le  \varepsilon  \int_0^t \int_{\mathbb R}\frac{v}{\mu} S_x^2 \dif x \dif t+C(\varepsilon)((E^\frac{3}{2}(t)+\delta^\frac{1}{2}+\delta^{3-\frac{2}{q}})+ \int_0^t (1+t)^{-2}\|\varphi_x\|_{L^2}^2\dif t).
\end{align*}
The estimates for $\int_0^t R_2^6(t)\dif t $ and $\int_0^t R_2^7(t)\dif t$ can be done in a similar way, we omit the details. 
Finally, let's estimate the term$\int_0^t R_k^8 (t)\dif t$. Firstly, for $k=1$, we have
\begin{align*}
\int_0^t R_1^8(t)\dif t &=-\int_0^t \int_{\mathbb R} \frac{v_x \psi_x  S_x}{v} \dif x \dif t\\
&\le \varepsilon  \int_0^t \int_{\mathbb R}\frac{v}{\mu} S_x^2 \dif x\dif t+C(\varepsilon) \int_0^t \int_{\mathbb R} (V_x^2+\varphi_x^2)\psi_x^2 \dif x \dif t\\
&\le \varepsilon  \int_0^t \int_{\mathbb R}\frac{v}{\mu} S_x^2 \dif x\dif t+ C(\varepsilon) (E^2(t)+\delta^8).
\end{align*}
and for $k=2$,
\begin{align*}
\int_0^t R_2^8(t) \dif t &=\int_0^t \int_{\mathbb R} \frac{2 v_x^2 \partial_x\psi-2 v v_x \psi_{xx}}{v^2} S_{xx}\dif x \dif t\\
&\le \varepsilon  \int_0^t \int_{\mathbb R}\frac{v}{\mu} S_{xx}^2+C(\varepsilon)\int_0^t \int_{\mathbb R} ( (|V_x|^4+|\varphi_x|^4)|\psi_x|^2+(|V_x|^2+|\varphi_x|^2) |\psi_{xx}|^2 )\dif x \dif t\\
&\le \varepsilon  \int_0^t \int_{\mathbb R}\frac{v}{\mu} S_{xx}^2 \dif x\dif t+C(\varepsilon) (E^2(t)+\delta^8).
\end{align*}

 Combining the above estimates and using the Gronwall's inequality, we get the desired results.
\end{proof}

The next lemma gives the dissipative estimates of $\varphi_x$ and $\psi_x$. 

\begin{lemma}\label{le5.3}
There exists a constant $C$ such that for $0\le t\le T$, we have
\begin{align}\label{5.8}
\int_0^t (\|\varphi_x\|_{H^1}^2+\|\psi_x\|_{H^1}^2) \dif t \le C(E_0+\delta^\theta+E^\frac{3}{2}(t)).
\end{align}
\end{lemma}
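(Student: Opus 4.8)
The goal is to recover a dissipative bound on $\int_0^t (\|\varphi_x\|_{H^1}^2 + \|\psi_x\|_{H^1}^2)\,\dif t$, which is \emph{not} directly visible from the energy identities of Lemma \ref{le5.1} and Lemma \ref{le5.2} (those only give control of $S$, $S_x$, $S_{xx}$ and $\sqrt{V_t}\,\varphi$ in $L^2_tL^2_x$, plus the top-order $H^2$ energy). The standard device is to build an ``interaction functional'': multiply the momentum equation $\eqref{4.1}_2$ (and its $\partial_x$-derivative) by $\varphi_x$ (resp.\ $\partial_x^2\varphi$) and integrate. Because $[p(\varphi+V)-p(V)]_x = p'(\varphi+V)\varphi_x + (\text{lower order in }V_x)$ and $-p'(v) \ge c > 0$, the term $\int p'(\varphi+V)\varphi_x^2\,\dif x$ supplies the missing negative definite quantity $-c\|\varphi_x\|^2$, at the price of a time derivative $\frac{\dif}{\dif t}\int \psi\varphi_x\,\dif x$ and a ``good'' term $\int \psi \varphi_{xt}\,\dif x = \int \psi\psi_{xx}\,\dif x = -\|\psi_x\|^2$ after using $\eqref{4.1}_1$. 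So both $\|\varphi_x\|^2$ and $\|\psi_x\|^2$ come out with the correct sign.

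\textbf{Key steps, in order.} First I would form, for $k=0,1$, the quantity $\frac{\dif}{\dif t}\int_{\mathbb R} \partial_x^k\psi\,\partial_x^{k+1}\varphi\,\dif x$ using $\eqref{5.6}$ (with the convention $\partial_x^0 = \mathrm{id}$ and $\eqref{4.1}$ for $k=0$). Expanding, one gets
\[
\frac{\dif}{\dif t}\int \partial_x^k\psi\,\partial_x^{k+1}\varphi\,\dif x = \int \partial_t\partial_x^k\psi\cdot\partial_x^{k+1}\varphi\,\dif x + \int \partial_x^k\psi\cdot\partial_x^{k+1}\psi_x\,\dif x,
\]
where in the second integral $\partial_x^{k+1}\varphi_t = \partial_x^{k+2}\psi$, and integration by parts turns it into $-\|\partial_x^{k+1}\psi\|^2$. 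In the first integral, substitute $\partial_t\partial_x^k\psi$ from $\eqref{5.6}_2$; the principal term $-\int \partial_x^k(p'(\varphi+V)\varphi_x)\,\partial_x^{k+1}\varphi\,\dif x$ yields $-\int p'(\varphi+V)(\partial_x^{k+1}\varphi)^2\,\dif x + (\text{commutator})$, and $-p' \ge c>0$ gives $+c\|\partial_x^{k+1}\varphi\|^2$. The term $\int \partial_x^{k+1}S\cdot\partial_x^{k+1}\varphi\,\dif x$ is absorbed by $\varepsilon\|\partial_x^{k+1}\varphi\|^2 + C(\varepsilon)\|\partial_x^{k+1}S\|^2$, and the latter is integrable in time by Lemma \ref{le5.2}. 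The remaining source terms $\mu\partial_x^{k+1}(U_x/V)$ and $\partial_x^{k+1}g(V)$, together with the commutators and the $V_x$-terms from $\partial_x^k((p'(\varphi+V)-p'(V))V_x)$, are handled exactly as the corresponding $R_k^j$'s in Lemma \ref{le5.2}: each is bounded by $C(\delta^\theta + E^{3/2}(t)) + \varepsilon\int_0^t\|S_x\|_{H^1}^2 + C\int_0^t(1+t)^{-d}(\|\varphi_x\|_{H^1}^2+\|\psi_x\|_{H^1}^2)\,\dif t$ using Lemma \ref{le3.2} and the smallness of $E(t)+\delta$. Then I integrate over $(0,t)$: the total-derivative terms give boundary contributions $\int \partial_x^k\psi\,\partial_x^{k+1}\varphi\,\dif x\big|_0^t$, which are controlled by $C(E_0 + E^{3/2}(t))$ (e.g.\ $|\int\psi\varphi_x| \le \|\psi\|\|\varphi_x\| \le CE(t)$ at time $t$ — one can keep it at the $H^2$-level via $E(t)^2$, then absorb using $E(T)+\delta$ small, or bound by $\frac12 E_0^2$ at $t=0$).

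\textbf{Closing the estimate.} Summing the $k=0$ and $k=1$ identities produces, after time integration,
\[
c\int_0^t(\|\varphi_x\|_{H^1}^2 + \|\psi_x\|_{H^1}^2)\,\dif s \le C(E_0 + \delta^\theta + E^{3/2}(t)) + \varepsilon\int_0^t\|S_x\|_{H^1}^2\,\dif s + C\int_0^t(1+s)^{-d}(\|\varphi_x\|_{H^1}^2+\|\psi_x\|_{H^1}^2)\,\dif s,
\]
where I would additionally need $\int_0^t \|S_x\|_{H^1}^2\,\dif s \le C(E_0+\delta^\theta+E^{3/2}(t))$ from Lemma \ref{le5.2}. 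The last term on the right is harmless provided $d>1$ (which holds for the decay rates produced by Lemma \ref{le3.2}); alternatively one invokes Gronwall. Choosing $\varepsilon$ small finishes the proof.

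\textbf{Main obstacle.} The genuinely delicate point is the top-order ($k=1$) commutator and the cross term involving $\partial_x^{k+1}S$. Unlike in Lemma \ref{le5.2}, here the multiplier is $\partial_x^{k+1}\varphi$, so the term $\int \partial_x^{k+1}S\,\partial_x^{k+1}\varphi\,\dif x$ costs us an $\varepsilon\|\varphi_{xx}\|^2$ which competes with the $c\|\varphi_{xx}\|^2$ we are trying to produce — one must verify the bookkeeping so that $c - \varepsilon > 0$ after collecting \emph{all} $\varepsilon$-terms (including those hidden in bounding $R_1^2$-type commutators like $p''(\varphi+V)(\varphi_x+V_x)\varphi_x\cdot\varphi_{xx}$, which wants $\|\varphi_x\|_{L^\infty}\|\varphi_x\|\|\varphi_{xx}\| \le CE^{1/2}(\|\varphi_x\|^2+\|\varphi_{xx}\|^2)$, absorbable only because $E(t)$ is small). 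Also one must be careful that no uncontrolled $\|\psi_{xx}\|^2$ or $\|\varphi_{xxx}\|$ appears — the former is exactly the ``good'' term $-\|\partial_x^{k+1}\psi\|^2$ with $k=1$ and the latter never arises because $\partial_x^{k+1}\varphi_t=\partial_x^{k+2}\psi$ is integrated by parts onto $\partial_x^k\psi$, staying at order $k+1$ in $\psi$.
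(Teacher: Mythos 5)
Your first half reproduces the paper's first step: multiplying $\eqref{5.6}_2$ by $\partial_x^{k+1}\varphi$ (equivalently, differentiating the interaction functional $\int\partial_x^k\psi\,\partial_x^{k+1}\varphi\,\dif x$) does yield the estimate
$\int_0^t\|\varphi_x\|_{H^1}^2\,\dif s\le C\int_0^t\|\psi_x\|_{H^1}^2\,\dif s+C(E_0+\delta^\theta+E^{3/2}(t))$,
which is exactly the paper's \eqref{5.10}. But your claim that ``both $\|\varphi_x\|^2$ and $\|\psi_x\|^2$ come out with the correct sign'' from this single functional is a sign error, and the closing inequality you write does not follow. Indeed,
\begin{align*}
\frac{\dif}{\dif t}\int_{\mathbb R}\partial_x^k\psi\,\partial_x^{k+1}\varphi\,\dif x
=\int_{\mathbb R}\partial_t\partial_x^k\psi\,\partial_x^{k+1}\varphi\,\dif x-\|\partial_x^{k+1}\psi\|_{L^2}^2 ,
\end{align*}
and substituting $\eqref{5.6}_2$ into the first integral produces $-\int p^\prime(\varphi+V)(\partial_x^{k+1}\varphi)^2\,\dif x\ge c\|\partial_x^{k+1}\varphi\|_{L^2}^2$ up to error terms. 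So the identity reads, schematically, $\frac{\dif}{\dif t}\int\partial_x^k\psi\,\partial_x^{k+1}\varphi\ge c\|\partial_x^{k+1}\varphi\|^2-\|\partial_x^{k+1}\psi\|^2-(\text{errors})$: the two quantities enter with \emph{opposite} signs, and integrating in time (or summing over $k$) can only trade one against the other. No choice of sign of the functional, and no admixture of the basic energy identity (which only dissipates $S$, $S_x$, $S_{xx}$ and $\sqrt{V_t}\,\varphi$, by Lemmas \ref{le5.1}--\ref{le5.2}), produces $\int_0^t\|\psi_x\|_{H^1}^2\,\dif s$ on the good side. This is the essential difference from the classical case $\tau=0$: here the momentum equation contains no viscosity term, so $\psi_x$-dissipation is simply not visible from the $(\varphi,\psi)$ equations alone.

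The missing ingredient is the paper's second step: test the relaxation equation $\eqref{5.6}_3$ with $\partial_x^{k+1}\psi$. The constitutive term $\partial_x^k\bigl(\frac{\mu}{v}\psi_x\bigr)\,\partial_x^{k+1}\psi$ then yields the dissipation $\int\frac{\mu}{v}(\partial_x^{k+1}\psi)^2\,\dif x$ on the left, while the remaining terms ($\tau\partial_t\partial_x^kS\cdot\partial_x^{k+1}\psi$, handled by integrating by parts in time and replacing $\partial_t\partial_x^k\psi$ via $\eqref{5.6}_2$; $\partial_x^kS\cdot\partial_x^{k+1}\psi$; the commutator; and the $U_x/V$ source terms) are bounded by
$\varepsilon\int_0^t\|\varphi_x\|_{H^1}^2\,\dif s+C(E_0+\delta^\theta+E^{3/2}(t))$,
crucially using the time-integrability of $\|S\|_{H^2}^2$ already established in Lemmas \ref{le5.1} and \ref{le5.2}. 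This gives the paper's \eqref{5.12}, and only the combination of \eqref{5.10} with \eqref{5.12} (with $\varepsilon$ small) closes the lemma. As written, your proposal never derives a bound on $\int_0^t\|\psi_x\|_{H^1}^2\,\dif s$, so the argument cannot be completed.
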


\begin{proof}
Multiplying the equation $\eqref{5.6}_2$ by $\partial_x^{k+1} \varphi$ for $k=0$ or $1$,  and integrating over $(0, t)\times \mathbb R$, we get
\begin{align*}
\int_0^t \int_{\mathbb R} - p^\prime(\varphi+V) (\partial_x^{k+1} \varphi)^2 \dif x \dif t=: \SUM j 1 6 M_k^j,
\end{align*}
where
\[
M_k^1=\int_0^t \int_{\mathbb R} \partial_t \partial_x^k \psi \cdot \partial_x^{k+1} \varphi \dif x \dif t,
M_k^2=\int_0^t \int_{\mathbb R} (\partial_x^k(p^\prime(\varphi+V) \varphi_x)-p^\prime(\varphi+V) \partial_x^{k+1}\varphi) \cdot \partial_x^{k+1} \varphi \dif x \dif t,
\]
\[
M_k^3=\int_0^t \int_{\mathbb R} \partial_x^k ( (p^\prime(\varphi+V)-p^\prime(V)) V_x) \cdot \partial_x^{k+1}\varphi \dif x \dif t, M_k^4=\int_0^t \int_{\mathbb R} \partial_x^{k+1} S \cdot \partial_x^{k+1} \varphi \dif x \dif t,
\]
\[
M_k^5=-\int_0^t \int_{\mathbb R} \mu \partial_x^{k+1} \left(\frac{U_x}{V}\right) \partial_x^{k+1}\varphi \dif x \dif t, M_k^6=\int_0^t \int_{\mathbb R} \partial_x^{k+1} g(V) \partial_x^{k+1} \varphi \dif x \dif t.
\]
We shall  show that
\begin{align}\label{5.9}
\SUM j 1 6 M_k^j \le \varepsilon \int_0^t \int_{\mathbb R} (\partial_x^{k+1} \varphi)^2 \dif x \dif t +C\int_0^t \int_{\mathbb R} (\partial_x^{k+1}\psi)^2 \dif x \dif t+C(E^\frac{3}{2}(t)+E_0+\delta^\theta).
\end{align}
Firstly, by doing integration by part and using equation $\eqref{5.6}_1$, we get
\begin{align*}
&\int_0^t \int_{\mathbb R} \partial_t \partial_x^k \psi \partial_x^{k+1}\varphi \dif x \dif t\\
&=\int_{\mathbb R} \partial_x^k \psi(t) \partial_x^{k+1} \varphi(t) \dif x-\int_{\mathbb R} \partial_x^k \psi_0 \partial_x^{k+1} \varphi_0 \dif x
+\int_0^t \int_{\mathbb R}\partial_x^{k+1}\psi \partial_t \partial_x^k \varphi \dif x \dif t\\
&\le \int_0^t \int_{\mathbb R} (\partial_x^{k+1} \psi)^2 \dif x \dif t + C( E^\frac{3}{2}(t)+\delta^\theta+E_0).
\end{align*}
Secondly, for $k=0$, $M_0^2$ vanishes and for $k=1$,
\begin{align*}
M_1^2&=\int_0^t \int_{\mathbb R} p^{\prime\prime}(\varphi+V)(\varphi_x+V_x)\varphi_x \varphi_{xx} \dif x \dif t\\
&\le \varepsilon \int_0^t \int_{\mathbb R}(\varphi_{xx})^2 \dif x \dif t +C\int_0^t \int_{\mathbb R}(\varphi_x^4+V_x^2\varphi_x^2) \dif x\\
&\le \varepsilon  \int_0^t \int_{\mathbb R} (\varphi_{xx})^2\dif x\dif t+C(E^2(t)+\delta^8).
\end{align*}
Similarly, we get for $k=0$, 
\begin{align*}
M_0^3 &\le \varepsilon \int_0^t \int_{\mathbb R} (\varphi_x)^2 \dif x \dif t+ C\int_0^t \int_{\mathbb R} \varphi^2 V_x^2\dif x \dif t \\
&\le \varepsilon \int_0^t \int_{\mathbb R} (\varphi_x)^2 \dif x \dif t+\sup \|\varphi\|_{L^2}^2 \int_0^t \|V_x\|_{L^\infty}^2 \dif t\\
&\le \varepsilon \int_0^t \int_{\mathbb R} (\varphi_x)^2 \dif x \dif t +C(E_0+\delta),
\end{align*}
and $k=1$,
\begin{align*}
M_1^3&\le \varepsilon \int_0^t \int_{\mathbb R} (\varphi_{xx})^2 \dif x \dif t +\int_0^t \int_{\mathbb R}( V_{xx}^2+V_x^4+\varphi_x^2 V_x^2)\dif x \dif t \\
&\le \varepsilon \int_0^t \int_{\mathbb R}(\varphi_{xx})^2 \dif x \dif t
+C(\delta^{3-\frac{2}{q}}+\delta).
\end{align*}
By using the dissipation of $S_x$, we have
\begin{align*}
M_k^4&\le \varepsilon \int_0^t \int_{\mathbb R} (\partial_x^{k+1}\varphi)^2 \dif x\dif t+C \int_0^t \int_{\mathbb R} (\partial_x^{k+1} S)^2 \dif \dif t \\
&\le \varepsilon \int_0^t \int_{\mathbb R} (\partial_x^{k+1}\varphi)^2\dif x \dif t +C(E^\frac{3}{2}(t)+E_0+\delta^\theta).
\end{align*}
$M_k^5$ and $M_k^6$ can be estimated in the same way as in Lemme \ref{le5.2}. Indeed, we have
\begin{align*}
M_0^5&\le C \int_0^t \int_{\mathbb R}( |\partial_{xx} U| +|U_x| |V_x| ) |\varphi_x| \dif x \dif t\\
&\le \varepsilon \int_0^t \int_{\mathbb R} |\varphi_x|^2 \dif x \dif t+C\int_0^t ( \|\partial_{xx}U\|_{L^2}^2+\|U_x|_{L^4}^4+|V_x|_{L^4}^4 )\dif t\\
&\le  \varepsilon \int_0^t \int_{\mathbb R} |\varphi_x|^2 \dif x \dif t
+C(\delta^{3-\frac{2}{q}}+\delta),
\end{align*}
and
\begin{align*}
M_1^5&\le  C \int_0^t ( \|U_{xxx}\|+\|U_{xx}\|_{L^4} \|V_x\|_{L^4}+\|V_{xx}\|_{L^4} \|U_x\|_{L^4}+ \|V_x^2\|_{L^4} \|U_x\|_{L^4}) \|\partial_{xx}\varphi\| \dif t\\
&\le \varepsilon \int_0^t \int_{\mathbb R} (\partial_{xx} \varphi)^2 \dif x \dif t +C( \delta^{3-\frac{2}{q}}+\delta^{5-\frac{3}{q}}+\delta).
\end{align*}
Note that $\int_0^t \|\partial_x g(V)\|_{L^2} \dif t \le C\delta^\frac{1}{2}$, we get
\[
M_0^6\le \sup \|\varphi_x\|_{L^2} \int_0^t \|\partial_x g(V)\|_{L^2} \dif t \le C \delta^\frac{1}{2}.
\]
Besides, we have
\begin{align*}
M_1^6\le \int_0^t \int_{\mathbb R} ( |V_{xx}|+|V_x|^2) |\varphi_{xx}| \dif x \dif t \le \varepsilon \int_0^t \int_{\mathbb R} (\varphi_{xx})^2 \dif x \dif t+C(\delta^{3-\frac{2}{q}}+\delta).
\end{align*}
Combining the above estimates, summing up $k$ from $0$ to $1$, we finally get
\begin{align}\label{5.10}
\int_0^t \|\varphi_x\|_{H^1}^2 \dif t \le C \int_0^t \|\psi_x\|_{H^1}^2 \dif t+ C(E^\frac{3}{2}(t)+E_0+\delta^\theta).
\end{align}
Now, we derive the dissipative estimates of $\psi_x$. Multiplying the equation by $\partial_x^{k+1}\psi$, we derive that
\begin{align*}
\int_0^t \int_{\mathbb R} \frac{\mu}{v} (\partial_x^{k+1} \psi)^2 \dif x \dif t =: \SUM j 1 5 N_k^j,
\end{align*}
where
\[
N_k^1=\int_0^t \int_{\mathbb R}  \tau \partial_t \partial_x^k S \partial_x^{k+1}\psi \dif x \dif t,
N_k^2=\int_0^t \int_{\mathbb R}  \partial_x^k S \partial_x^{k+1}\psi \dif x \dif t,
\]
\[
N_k^3=-\int_0^t \int_{\mathbb R}  \left( \partial_x^k \left(\frac{\mu}{v} \partial_x\psi\right)-\frac{\mu}{v}\partial_x^{k+1}\psi\right) \partial_x^{k+1}\psi \dif x \dif t,
\]
\[
N_k^4=\int_0^t \int_{\mathbb R}  \tau \mu \partial_t \partial_x^k \left(\frac{U_x}{V}\right) \partial_x^{k+1} \psi \dif x \dif t,
N_k^5=\int_0^t \int_{\mathbb R}  \partial_x^k \left( \frac{U_x \varphi}{v V}\right) \partial_x^{k+1} \psi \dif x \dif t.
\]
we shall show that
\begin{align}\label{5.11}
\SUM j 1 5 N_k^j\le \frac{1}{2}\int_0^t \int_{\mathbb R} \frac{\mu}{v} (\partial_x^{k+1} \psi)^2 \dif x \dif t + \varepsilon \int_0^t \|\partial_x\varphi\|_{H^1}^2 \dif t + C(E^\frac{3}{2}+E_0+\delta^\theta),
\end{align}
which implies that
\begin{align}\label{5.12}
\int_0^t \|\psi_x\|_{H^1}^2 \dif t \le \varepsilon \int_0^t \|\varphi_x\|_{H^1}^2 \dif t+ C(E^\frac{3}{2}(t)+E_0+\delta^\theta).
\end{align}
Combining \eqref{5.10} and \eqref{5.12} together, we get the desired result.  Therefore, we only need to show \eqref{5.11} hold. 

Firstly, using Lemma \ref{le5.2} and integrating by part with respect to $x$ and $t$, we have
\begin{align*}
N_k^1\le C(E^\frac{3}{2}(t)+\delta^\theta+E_0)+\int_0^t \int_{\mathbb R}  \partial_x^{k+1} S\partial_t \partial_x^k \psi \dif x \dif t,
\end{align*}
while, using equation $\eqref{5.6}_2$ and Lemma \ref{le5.2}, and exploiting the same methods as above (estimates of $M_k^j, j=2, \dots 6$, for example),  we get
\begin{align*}
&\int_0^t \int_{\mathbb R}  \tau \partial_x^{k+1} S \partial_t \partial_x^k \psi \dif x \dif t\\
&=\int_0^t \int_{\mathbb R}  \tau \partial_x^{k+1} S \left( -\partial_x^k (p^\prime(\varphi+V)\varphi_x)-\partial_x^k( (p^\prime(\varphi+V)-p^\prime(V))V_x)\right.\\
&\left. \qquad\qquad\qquad \qquad\qquad\qquad\qquad+\tau \partial_x^{k+1}S+
\mu \partial_x^{k+1} \left(\frac{U_x}{V}\right)-\partial_x^{k+1}g(V)\right) \dif x \dif t\\
&\le \varepsilon \int_0^t\int_{\mathbb R} (\partial_x^{k+1} \varphi)^2 \dif x \dif t+C(\varepsilon) \int_0^t \int_{\mathbb R} (\partial_x^{k+1} S)^2 \dif x \dif t +C (E^\frac{3}{2}(t)+\delta^\theta+E_0)\\
&\le \varepsilon \int_0^t\int_{\mathbb R} (\partial_x^{k+1} \varphi)^2 \dif x \dif t+C (E^\frac{3}{2}(t)+\delta^\theta+E_0),
\end{align*}
which gives the estimates of $N_k^1$.  Moreover, it's easy to see that
\begin{align*}
N_k^2&\le \frac{1}{8} \int_0^t \int_{\mathbb R} \frac{\mu}{v} (\partial_x^{k+1} \psi)^2 \dif x \dif t+ C \int_0^t \| S\|_{H^1}^2 \dif t \\
&\le \frac{1}{8} \int_0^t \int_{\mathbb R} \frac{\mu}{v} (\partial_x^{k+1} \psi)^2 \dif x \dif t+C(E^\frac{3}{2}(t)+\delta^\theta+E_0).
\end{align*}
Now, we estimate the term $N_k^j, j=3, 4, 5$. Note that, for $k=0$, $N_0^3$ vanishes, and for $k=1$, 
\begin{align*}
N_1^3&=\int_0^t\int_{\mathbb R} \frac{\mu v_x}{v^2} \psi_x \psi_{xx} \dif x \dif t\\
&\le \frac{1}{8} \int_0^t \int_{\mathbb R} \frac{\mu}{v} (\psi_{xx})^2\dif x \dif t +C\int_0^t\int_{\mathbb R} (|V_x|^2+|\varphi_x|^2)  |\psi_x|^2 \dif x \dif t\\
&\le \frac{1}{8} \int_0^t \int_{\mathbb R} \frac{\mu}{v} (\psi_{xx})^2\dif x \dif t + C(E^2(t)+\delta^8).
\end{align*}
On the other hand, for $k=0$, using the equation \eqref{3.6} and Lemma \ref{le3.2}, we have
\begin{align*}
N_0^4+N_0^5&\le C \int_0^t\int_{\mathbb R} \left( |g(V)_{xx}|+|p(V)_{xx}|+|U_x|^2+|U_x|  |\varphi| \right) |\psi_x| \dif x \dif t \\
&\le \frac{1}{8} \int_0^t\int_{\mathbb R} \frac{\mu}{v} |\psi_x|^2 \dif x \dif t +C (E_0+\delta+\delta^{3-\frac{2}{q}}),
\end{align*}
and for $k=1$, comparing with the estimates of $R_1^6$ and $R_1^7$, we do have
\begin{align*}
N_1^4+N_1^5 \le \frac{1}{8} \int_0^t\int_{\mathbb R} \frac{\mu}{v} |\psi_{xx}|^2 \dif x \dif t +C(E^\frac{3}{2}(t)+E_0+\delta^\theta).
\end{align*}
This finish the proof of the Lemma.
\end{proof}
Combining the results of Lemma \ref{le5.1}-\ref{5.3}, the proof of the Proposition \ref{pro4.2} is finished.

\textbf{Acknowledgement:} Yuxi Hu's research is supported by NNSFC (Grant No. 11701556) and Yue Qi Young Scholar project, China University of Mining and Technology (Beijing).

\vspace*{3em}
 \noindent {\footnotesize  Yuxi Hu, Department of Mathematics, China University of Mining and Technology, Beijing, 100083, P.R. China, yxhu86@163.com\\[1em]
 Xuefang Wang,  Tianjin Binhai New Area Dagang No.8 Middle school, Tianjin, 300270, P.R. China.
}


\begin{thebibliography}{aaaaa}
\bibitem{BHZ} Y. Bai, L. He and H. Zhao, Nonlinear stability of rarefaction waves for a hyperbolic system with Cattaneo's law, {\it Comm. Pure. Appl. Anal.} {\bf 20} (2021), 2441-2474.

\bibitem{CS} D. Chakraborty and J.E. Sader, Constitutive models for linear compressible viscoelastic flows of simple liquids at nanometer length scales,
{\it Physics of Fluids} {\bf 27} (2015), 052002-1--052002-13.

\bibitem{FeMu012} H.D. Fern\'andez Sare and J.E. Mu\~noz Rivera, Optimal rates of decay in 2-d thermoelasticity with second sound, {\it J. Math. Phys.} {\bf 53} (2012), 073509.

\bibitem{FeRa009} H.D. Fern\'andez Sare and R. Racke, On the stability of damped Timoshenko systems -- Cattaneo versus Fourier law. {\it Arch. Rational Mech. Anal.} {\bf 194} (2009), 221-251.


\bibitem{HW} Y. Hu and N. Wang, Global existence versus blow-up results for one dimensional compressible Navier-Stokes equations with Maxwell's law, {\it Math. Nachr.} {\bf 292} (2019), 826-840.

\bibitem{HRW}  Y. Hu, R. Racke and N. Wang, Formation of singularities for one-dimensional relaxed compressible Navier-Stokes equations, {\it J. Diff. Eqs.} {\bf 327}  (2022), 145-165.




\bibitem{KA}{Y.I. Kanel, On a model system of equations of one-dimensional gas motions, {\it J.Differ.Equ.} {\bf 4} (1968), 374-380.}

\bibitem{KZ} S. Kawashima and P. Zhu, Asymptotic stability of rarefaction wave for the Navier-Stokes equations for a compressible fluid wave in the half space,{\it Arch. Rat. Mech. Anal.}, {\bf 194}  (2009), 105-132.

\bibitem{KMN} S. Kawashima, A. Matsumura and K. Nishihara, Asymptotic behavior of solutions for the equations of a viscous heat conductive gas, {\it Proc. Japan Acad.}, {\bf 62} (1986), 249-252.



\bibitem{KT} S. Kawashima and Y. Tanaka, Stability of rarefaction waves for a model system of a radiating gas, {\it Kyushu J, Math.}, {\bf 58} (2004), 211-250.

\bibitem{MAT} A. Matsumura, Asymptotic toward rarefaction wave of solutions of the Broadwell model of a discrete velocity gas, {\it Japan J. Appl. Math.}, {\bf 4} (1987), 489-502.


\bibitem{MN1} A. Matsumura and K. Nishihara, Asymptotic toward the rarefaction waves of solutions of a one-dimensional model system for compressible viscous gas, {\it Japan. J. Appl. Math.}, {\bf 3} (1986), 1-13.

\bibitem{MN2} A. Matsumura and K. Nishihara, Global stability of the rarefaction waves of a one-dimensional model system for compressible viscous gas, {\it Comm. Math. Phys.}, {\bf 144} (1992), 325-335.

\bibitem{MMA} G. Maisano, P. Migliardo, F. Aliotta, C. Vasi, F. Wanderlingh and G. D'Arrigo, Evidence of anomalous acoustic behavior from brillouinscattering in supercooledvater, {\it Phys. Rev. Lett.} {\bf 52} (1984), 1025.


\bibitem{MA} J.C. Maxwell, On the dynamics theory of gases, {\it Phil. Trans. R. Soc. Lond.} {\bf 157} (1867), 49-88.


\bibitem{NNK} K. Nakamura, T. Nakamura and S. Kawashima, Asymptotic stability of rarefaction waves for a hyperbolic system of balance laws, {\it Kinetic and Related Models}, {\bf 12} (4), 2019, 923-944.

\bibitem{PC} M. Pelton, D. Chakraborty,E. Malachosky, P. Guyot-Sionnest and J. E. Sader, Viscoelastic flows in simple liquids generated by vibrating nanostructures,
{\it Phys. Rev. Lett.}, {\bf 111} (2013), 244502.

\bibitem{QuRa011} R. Quintanilla and R. Racke, Addendum to: Qualitative aspects of solutions in resonators, {\it Arch. Mech.} {\bf 63} (2011), 429-435.


\bibitem{SRK} F. Sette, G. Ruocco, M. Krisch, U. Bergmann, C. Masciovecchio, V. Mazzacurati, G. Signorelli and R. Verbeni, Collective dynamics in water by high energy resolution inelastic X-Ray scattering, {\it Phys. Rev. Lett.}, {\bf 75} (1995), 850.


\end{thebibliography}
\end{document}